\pgfplotsset{compat=1.18}
\newtheorem{definition}{Definition}
\newtheorem{remark}{Remark}
\theoremstyle{thmstyleone}%
\newtheorem{theorem}{Theorem}
\newtheorem{proposition}[theorem]{Proposition}%
\newtheorem{lemma}{Lemma}
\theoremstyle{thmstyletwo}%
\newtheorem{example}{Example}%
\theoremstyle{thmstylethree}%
\newtheorem{corollary}{Corollary}
\begin{document}

\title[Article Title]{On some F-spaces associated to the functions  satisfying Mulholland inequality.}

\author{\begin{center}Lav Kumar Singh${}^1$\footnote{This research was supported by the Institute of Mathematics, Physics and Mechanics, Ljubljana (Slovenia), within the research program \say{Algebra,  Operator theory and Financial mathematics (code: P1-0222).}} ~and Aljo\v{s}a Peperko${}^{1,2}$\\
	${}^1$Institute of Mathematics, Physics and Mechanics,
	Jadranska 19, SI-1000 Ljubljana, Slovenia.\\${}^2$Faculty of Mechanical Engineering,
	University of Ljubljana, A\v{s}ker\v{c}eva 6, SI-1000 Ljubljana, Slovenia. \\
	Email: lavksingh@hotmail.com; aljosa.peperko@fs.uni-lj.si\end{center}}





\abstract{In this article we explore a new growth condition on Young functions, which we call Mulholland condition, pertaining to the mathematician H.P Mulholland, who studied these functions for the first time, albeit in a different context. We construct a non-trivial Young function $\Omega$ which satisfies Mulholland condition and $\Delta_2$-condition. We then associate  $F$-norms to the vector space $X_1\oplus X_2$, where $X_1$ and $X_2$ are Banach spaces, using the function $\Omega$. This $F$-space contains the Banach space $X_1$ and $X_2$ as a maximal Banach subspace. Further, the Banach envelope $(X_1\oplus X_2,||.||_{\Omega_o})$ of this $F$-space corresponds to the Young function $\Omega_o$ who characteristic function is an asymptotic line to the characteristic function of the Young function $\Omega$. Thus these $F$-spaces serves as "interpolation space" for  Banach spaces $X_1$ and $(X_1\oplus X_2, ||.||_{\Omega_o})$ in some sense. These $F$-space are well behaved in regards to Hahn-Banach extension property, which is lacking in classical $F$-spaces like $L^p$ and $H^p$ for $0<p<1$. Towards the end, some direct sums for Orlicz spaces are discussed.}

\keywords{F-spaces, Frechet Spaces, Banach spaces, Metric space, Banach envelope, Interpolation spaces, Orlicz spaces }


\pacs[MSC Classification]{46A16, 46E30, 46B70}

\maketitle

\section{Introduction}\label{sec1}
$F$-spaces often arise in various subdomains of operator theory. For an introductory literature on $F$-spaces one can refer the book \say{An F-space sampler}  by N.J Kalton \cite{Kalton}, where he studies non-locally convex topological vector spaces ($F$-spaces in particular). Most of the text is dedicated to the study of $F$-space $L^p[0,1]$ and $H^p$ for $0<p<1$ (the $p$-norm being $||f||_p=\int|f|^pd\mu$). One can construct Banach envelope of any $F$-space, which in some sense is the \say{smallest} Banach space containing the original $F$-space by giving a new norm to the underlying vector space. As mentioned by Kalton, modern functional analysis mostly focuses on locally convex space, and rightly so. But in some cases, there is no reason to restrict the study to locally convex space, for example there is no reason to restrict the study of $H^p$ spaces only for $p\geq 1$. Several important results regarding $F$-spaces are explored in \cite{Cui},\cite{Drewnowski},\cite{Kalton1},\cite{Mastylo} and \cite{Kalton}.\\

 In this article we consider a family of special type of Young functions $\Phi(x)=|x|e^{\chi(\ln |x|)}$, where $\chi$ is a continuous even convex function on $\mathbb R$ which is also strictly increasing on positive axis. The function $\chi$ will be called the characteristic function of $\Phi$. Such Young functions are said to satisfy Mulholland condition. When $\chi$ is an affine function of the type $\chi(x)=ax+b$, then the associated Young function is of type $\Phi(x)=Cx^m$ for some $C>0$ and $m\in \mathbb N$. But these Young functions are of less interest to us. In section \ref{sec3}, we construct a special kind of Young function $\Omega$ whose characteristic function $\chi$ has an ever increasing growth. But the growth of $\Omega$ is still restricted by $\Delta_2$-condition. These characteristic functions have a linear asymptote at infinity, which will serve as characteristic function for Young function $\Omega_o$. Now for any two Banach spaces $X_1$ and $X_2$ and any Young function $\Phi$ with Mulholland condition, we can assign an $F$-norm $||.||_{\Phi}$ to the space $X_1\oplus X_2$. These $F$-norms are actually norms if the characteristic function of $\Phi$ is an affine function. Otherwise these $F$-norms fails homogeneity condition in general. We will show that for our special type of Young function $\Omega$ constructed in example \ref{example}, the $F$-space $(X_1\oplus X_2,||.||_\Omega)$ is not a Banach space and contains both $X_1$ and $X_2$ as maximal Banach subspaces. Further they manifest an interesting geometric phenomenon. The Banach envelope of $(X_1\oplus X_2,||.||_\Omega)$ is $(X_1\oplus X_2,||.||_{\Omega_o})$, where $\Omega_o$ is the Young function whose characteristic function is the asymptotic line to the characteristic function of $\Omega$. Interestingly these spaces also enjoy the Hahn-Banach extension property (HBEP), which is not a common appearance in $F$-spaces. 
\section{Preliminaries to Young functions and associated Orlicz spaces.}
\subsection{Young functions}
\begin{definition} A {\bf Young function} is a convex, left semicontinuous, even  function $\Phi:\mathbb R\to \mathbb [0,\infty]$  such that  $\Phi(0)=0$ and  $\lim_{x\to \infty}\Phi(x)=\infty$.
\end{definition}
A Young function is said to be {\it finite} if $\Phi(x)<\infty$ for each $x\in \mathbb R$. Finite Young functions are automatically continuous because finite convex functions are continuous. Further an Young function $\Phi$ is called {\it N-function} if it is continuous (and hence finite), $\lim_{x\to \infty}\frac{\Phi(x)}{x}=\infty$ and $\lim_{x\to0}\frac{\Phi(x)}{x}=0$. A Young function $\Phi$ is said to satisfy \textit{Mulholland conditions} if it is continuous and strictly increasing on $[0,\infty)$ and $\log \Phi(x)$ is a convex function of $\log x$. In fact it was shown in \cite{Mulholland}[5,Cor. 1] that a strictly increasing (on positive real axis) finite Young function $\Phi$ satisfying Mulholland condition is equivalent to it taking the form $\Phi(x)=|x|e^{\chi(\log |x|)}$ for some continuous increasing convex function $\chi$. In such situation, we say that $\chi$ is the characteristic function of $\Phi$.\\

\begin{example} The functions such as $\sinh |x|$, $|x|^{1+a}e^{b|x|^c}$ ($a,b,c>0$) are strict Young functions which satisfies Mulholland conditions.\end{example}

A Young function $\Phi$ is said to satisfy \textit{$\Delta_2$-condition} if there exists $M>0$ such that $\Phi(2x)\leq M\Phi(x)$ for all $x\geq x_0\geq 0$. For example, if we take $\chi$ to be an affine function on $\mathbb R$ with positive slope, then 
$\Phi(x)=|x|e^{\chi(\log |x|)}$ gives us a trivial Young function which is strict and satisfies both Mulholland condition and $\Delta_2$-conditions. In-fact there is an ubandance of strict Young functions which satisfy both these conditions. \\

Associated to a Young function $\Phi$, there exists another convex function $\Psi$ given by $$\Psi(y)=\sup\{x|y|-\Phi(x):~x\geq 0\},~y\in \mathbb R.$$ The function $\Psi$ is also a Young function and the pair $(\Phi,\Psi)$ is called complementary pair of Young functions. The complementary pair $(\Phi,\Psi)$ of Young functions is said to satisfy $\Delta_2$-condition if both $\Phi$ and $\Psi$ satisfy $\Delta_2$-condition.\\

\begin{remark}
	For $p\geq 1$, the maps $t\mapsto |t|^p$ are Young functions which satisfy both the $\Delta_2$-condition and Mulholland's condition. Further the associated complementary Young function  also satisfies $\Delta_2$. In-fact taking $\chi$ to be any affine function of positive slope on $\mathbb R$ makes $\Phi(t)=|t|e^{\chi(\log |t|)}$ a Young function which satisfy Mulholland condition and the complementary pair associated to it satisfies $\Delta_2$-condition.
\end{remark}
\begin{example} The Young function $\Phi:\mathbb R\to \mathbb R^+$ defined as $t\mapsto |t|e^{\frac{t^2-|t|}{2|t|}}$ is strict and satisfy Mulholland's condition. Further the complementary pair $(\Phi,\Psi)$ satisfies $\Delta_2$-condition. Clearly, $\Phi(t)=|t|e^{\sinh (\log|t|)}$ and hence it satisfies  Mulholland condition since $\sinh$ is strictly increasing convex function on $[0,\infty)$. Now notice that \begin{eqnarray*}
	\frac{\Phi(2t)}{\Phi(t)}&=2e^{\frac{2t^2+1}{4t}}\rightarrow \infty
\end{eqnarray*}
And hence, $\Phi$ does not satisfy $\Delta_2$-condition.
\end{example}
\subsection{Orlicz spaces}\label{Orlicz}
We define Orlicz space associated to a measure space $(X,\mu)$ and Young function $\Phi$ as $$L^\Phi(X)=\{f:X\to\mathbb C~:~f~\text{is measurable}~,\int_X\Phi(\beta |f|)d\mu<\infty~\text{for some}~\beta>0\}$$
$L^\Phi(X)$ becomes a Banach space with respect to the  Gauge norm $$N_\Phi(f)=\inf\left\{k>0~:~\int_X\Phi\left(\frac{|f|}{k}\right)d\mu\leq 1\right\}$$
There is an another equivalent norm on $L^\Phi(X)$, known as Orlicz norm $$\|f\|_\Phi=\sup\left\{\int_X|fg|d\mu~:~g\in L^\Psi(X)~\text{and}~\int_X\Psi(|g|)d\mu\leq 1\right\}.$$
Further, if $(\Phi,\Psi)$ are a pair of complementary Young functions, both satisfying $\Delta_2$-condition, then $\left(L^\Phi(X), N_\Phi\right)$ is a reflexive Banach space with  $\left(L^\Psi(X),\|.\|_\Psi\right)$ as its dual space.
\subsection{F-spaces}
\begin{definition}
	An $F$-space is a vector space $X$ over a field of real or complex numbers together with a metric $d:X\times X\to \mathbb [0,\infty)$ such that \begin{itemize}
		\item Scalar multiplication in $X$ is continuous with respect to the metric $d$ on $X$ and standard metric on $\mathbb C$ (or $\mathbb R$).
		\item Addition in $X$ is continuous with respect to $d$.
		\item $d$ is translation invariant i.e, $d(x+a,y+a)=d(x,y)$ for all $x,y,a\in X$.
		\item  Metric space $(X,d)$ is complete.
	\end{itemize}
	The map $x\mapsto \|x\|_F=d(x,0)$ is called \emph{F-norm}. An $F$-space is called \emph{Fréchet space} if the underlying topology is locally convex.
\end{definition}
\begin{definition}
	A topological vector space $X$ is called locally bounded if there exists an open neighborhood $A$ of $0$ such that for each open neighborhood $U$ of $0$ there exists  $t>0$ such that $A\subset  sU$ for all $s>t$.
\end{definition}
$F$-spaces are not locally convex in general and the closed unit ball $B_X$ is not convex. Hence, it is natural to consider the closed convex hull $co(B_X)$. The Gauge seminorm on X
$$\|x\|_C=\inf\left\{\lambda>0~:~\frac{f}{\lambda}\in co(B_X)\right\}$$
is actually a norm on $X$ if its dual $X^*$ separate points (see \cite[Ch.2, sec. 4]{Kalton}) and the identity mapping $i:(X,\|.\|_F)\to (X,\|.\|_C)$ is continuous. In-fact $\overline{(X,\|.\|_C)}$ is the \say{smallest} Banach space containing $(X,\|.\|_F)$.
\begin{definition}
	Let $(X,d)$ be an F-space with a separating dual. The Banach space $\overline{(X,\|.\|_C)}$ generated by the Gauge norm on $X$ is called the Banach envelope of the $F$-space $(X,d)$.
\end{definition}
\begin{example}
	Consider the $F$-space $(\mathbb R^2,\|.\|_p)$ for any $0<p<1$. Then the closed convex hull of the unit ball of $\mathbb R^2$ with respect to $\|.\|_p$ is same as the unit ball of $(\mathbb R^2,\|.\|_1)$ and hence $(\mathbb R^2,\|.\|_1)$ is the Banach envelope of $(\mathbb R^2,\|.\|_p)$.
\end{example}
\begin{theorem}\cite[Ch.2 Sec. 4]{Kalton}
	If $(X,d)$ is an $F$-space then the dual space (space of continuous linear functionals on $X$ with respect to $F$-norm) of $(X,d)$ is the same (isometrically isomorphic) as the dual space of its Banach envelope.
\end{theorem}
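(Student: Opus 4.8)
The plan is to realize both duals as sets of linear functionals on the same underlying vector space $X$ and to prove two things: that a linear functional is $\|\cdot\|_F$-continuous if and only if it is $\|\cdot\|_C$-continuous, and that the two dual norms agree on such functionals. Write $B_F=\{x:\|x\|_F\le 1\}$ for the unit ball of the $F$-norm. By the definition of the Gauge norm, the closed unit ball of $\|\cdot\|_C$ is exactly the closed convex hull $B_C:=co(B_F)$. For a linear functional $f$ I set $\|f\|_F^{*}=\sup\{|f(x)|:x\in B_F\}$ and $\|f\|_C^{*}=\sup\{|f(x)|:x\in B_C\}$, and the goal is the identity $\|f\|_F^{*}=\|f\|_C^{*}$ together with the coincidence of the two classes of continuous functionals.

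The easy inclusion uses the continuity of the identity map $i:(X,\|\cdot\|_F)\to(X,\|\cdot\|_C)$, already recorded in the excerpt: if $g$ is $\|\cdot\|_C$-continuous then $g=g\circ i$ is $\|\cdot\|_F$-continuous, so the envelope dual sits inside the $F$-space dual. Moreover $B_F\subseteq B_C$ gives $\|g\|_F^{*}\le\|g\|_C^{*}$ at once.

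The substance is the reverse inclusion with the opposite norm bound, and the mechanism is convexity. For a linear $f$ the map $|f|$ is convex, so for any convex combination $\sum_i t_i x_i$ with $x_i\in B_F$ one has $|f(\sum_i t_i x_i)|\le\sum_i t_i|f(x_i)|\le\sup_{B_F}|f|$; hence $\sup_{\mathrm{conv}(B_F)}|f|=\sup_{B_F}|f|=\|f\|_F^{*}$, where $\mathrm{conv}$ denotes the (non-closed) convex hull. If in addition $f$ is $\|\cdot\|_F$-continuous, then $|f|$ is continuous for the $F$-topology, so the supremum is unchanged when passing to the closure, giving $\|f\|_C^{*}=\sup_{co(B_F)}|f|=\sup_{\mathrm{conv}(B_F)}|f|=\|f\|_F^{*}$. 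In particular $f$ is bounded on $B_C$, hence $\|\cdot\|_C$-continuous, and the norms coincide; combined with the easy inclusion this yields the isometric identification of the two duals.

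The one delicate point, and the step I expect to be the main obstacle, is the finiteness of $\|f\|_F^{*}=\sup_{B_F}|f|$ for a $\|\cdot\|_F$-continuous $f$: because an $F$-norm is not homogeneous, boundedness of $f$ near $0$ does not propagate to all of $B_F$ by mere rescaling, and indeed this can fail in pathological $F$-spaces. It is settled by using that $B_F$ is a \emph{bounded} neighbourhood of $0$ (local boundedness, as holds for the spaces $(X_1\oplus X_2,\|\cdot\|_\Omega)$ treated here): choosing a neighbourhood $V$ of $0$ with $|f|\le 1$ on $V$, boundedness furnishes $s>0$ with $B_F\subseteq sV$, so for $x\in B_F$ we get $|f(x)|=s\,|f(s^{-1}x)|\le s$, whence $\|f\|_F^{*}\le s<\infty$. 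Once this finiteness is secured the convexity-and-closure computation above closes the argument; the remaining checks (that $co(B_F)$ is genuinely the $\|\cdot\|_C$-unit ball and that $|f|$ is $\|\cdot\|_F$-continuous) are routine.
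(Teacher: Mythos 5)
The paper offers no proof of this statement --- it is quoted from Kalton's book --- so there is nothing internal to compare against; your argument is the standard one for envelope duality and it is essentially correct. The three ingredients are all in place: the identity $\sup_{\mathrm{conv}(B_F)}|f|=\sup_{B_F}|f|$ from convexity of $|f|$, the passage to the closed convex hull via continuity of $|f|$, and the identification of $co(B_F)$ with the closed unit ball of $\|\cdot\|_C$ (for the last of these, note that $\{x:\|x\|_C\le 1\}=\bigcap_{\lambda>1}\lambda\, co(B_F)$, which equals $co(B_F)$ because scalar multiplication is continuous and $co(B_F)$ is closed). Your ``delicate point'' is genuinely the crux, and you resolve it correctly, but you should promote it from a remark to a hypothesis: the theorem as literally stated for arbitrary $F$-spaces is false. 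For $\omega=\mathbb R^{\mathbb N}$ with the usual $F$-norm $\|x\|_F=\sum 2^{-n}\min(|x_n|,1)$ one has $\|x\|_F\le 1$ for every $x$, so $B_F=co(B_F)=\omega$, the gauge $\|\cdot\|_C$ vanishes identically, and the ``Banach envelope'' has trivial dual while $\omega^*=c_{00}\neq\{0\}$. So boundedness of the unit ball (local boundedness plus the correct scaling of the ball used to form the envelope) is not merely the obstacle you expect but a necessary hypothesis; with it, your rescaling argument $|f(x)|=s|f(s^{-1}x)|\le s$ for $x\in B_F\subseteq sV$ is exactly right. Since the paper only invokes the theorem for the spaces $(X_1\oplus X_2,d_\Omega)$, whose unit ball is shown to be bounded in Theorem~\ref{locb}, your proof covers every use made of the result here.
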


	\section{Interpolation $F$-spaces of Banach spaces}\label{sec3}
We notice that there is an ubundance of non-trivial Young functions which satisfy both $\Delta_2$-condition and Mulholland condition.
	In the previous remark, we saw few trivial examples of Young functions which satisfy Mulholland condition and the complementary pair $(\Phi,\Psi)$  satisfies $\Delta_2$-condition. We now construct a non-trivial example of such a Young function $\Phi$. 
	\begin{example}\label{example} Let $M>0$ be a fixed number and $\{m_i\}_{i=0}^\infty$ be a strictly increasing sequence of positive real numbers converging to a fixed $M>0$. Consider the convex continuous even function $\chi:\mathbb R\to \mathbb R^+$ defined as piecewise straight lines on the intervals $[0,\ln 2],[\ln2,2\ln 2],\cdots,[r\ln 2,(r+1)\ln2],...$.The slope of the line in interval $[r\ln 2,(r+1)\ln2]$ is $m_{r}$. The growth of $\chi$ is depicted in figure below.\\
		
		\begin{center}
		
			\begin{tikzpicture}
			\begin{axis}[
				axis lines=middle,
				xlabel={$x$},
				ylabel={$y$},
				xmin=0, xmax=3.5,
				ymin=0, ymax=6,
				xtick={0,0.6931,1.3862,2.0794,2.7726,3.4657},
				xticklabels={$0$,$L$,$2L$,$3L$,$4L$,$5L$},
				ytick={0,0.6931,1.7327,2.9430,4.2494,6.1225,8.06},
				yticklabels={$0$,$L$,$2.5L$,$4.25L$,$6.125L$,$8.0625L$}
				]
				
				\addplot[blue, thick, domain=0:0.6931] {x};
				\addplot[blue, thick, domain=0.6931:1.3862] {0.6931 + 1.5*(x - 0.6931)};
				\addplot[blue, thick, domain=1.3862:2.0794] {2.5*0.6931 + 1.75*(x - 2*0.6931)};
				\addplot[blue, thick, domain=2.0794:2.7726] {4.25*0.6931 + 1.875*(x - 3*0.6931)};
				\addplot[blue, thick, domain=2.7726:3.4657] {6.125*0.6931 + 1.9375*(x - 4*0.6931)};
				\addplot[red, dotted, thick, domain=0:3.5] {2*x - 2*ln(2)};
				\node[rotate=38, anchor=south west, red] at (axis cs:2.6,3.2) {$y=2x - 2\ln 2$};
				\node[rotate=63, anchor=west, blue] at (axis cs:3,4.9) {$\chi(x)$};
				\node[rotate=0, anchor=west, blue] at (axis cs:0.7,0.6) {$m_1=\frac{3}{2}$};
				\node[rotate=0, anchor=west, blue] at (axis cs:1.4,1.5) {$m_2=\frac{7}{4}$};
				\node[rotate=0, anchor=west, blue] at (axis cs:2.05,2.8) {$m_3=\frac{15}{8}$};
				\node[rotate=0, anchor=west, blue] at (axis cs:2.7,3.4) {$m_4=\frac{31}{16}$};
				\node[rotate=0, anchor=west, blue] at (axis cs:1,-0.5) {Figure: Plot of $\chi(x)$};
				
				\addplot[dotted, thick, domain=0:3.5] {2*x};
				
				\node[rotate=37, anchor=south west] at (axis cs:1.5,3) {$y=2x$};
				
				\draw[dotted] (axis cs:0.6931,0.6931) -- (axis cs:0.6931,0);
				\draw[dotted] (axis cs:1.3862,1.8) -- (axis cs:1.3862,0);
				\draw[dotted] (axis cs:2.0794,2.8) -- (axis cs:2.0794,0);
				\draw[dotted] (axis cs:2.7726,4.1) -- (axis cs:2.7726,0);
				\draw[dotted] (axis cs:3.4657,5.5) -- (axis cs:3.4657,0);
				
			\end{axis}
		\end{tikzpicture}
	\end{center}

		 Now if we define $\Omega(t)=|t|e^{\chi(\log|t|)}$, then $\Omega$ is a Young function which satisfies Mulholland condition. It is very desirable for Young functions and their complementary functions to have $\Delta_2$-condition because it makes the associated Orlicz spaces reflexive and the simple functions becomes dense in $L^\Phi(X)$. Thus we show that the above constructed Young function is actually well behaved with respect to the growth and possess $\Delta_2$-condition.  Notice that for $t>0$ such that $\log t\in [r\log 2,(r+1)\log 2]$, we have
		\begin{align*}
			\frac{\Omega (2t)}{\Omega(t)}&=2e^{\chi(\log 2+\log t)-\chi(\log t)}\\
			&\leq2e^{m_{r+1}\ln 2}\\
			&\leq 2e^{M\ln 2}=2^{M+1}.
		\end{align*}
		
		Hence, $\Omega$ satisfies $\Delta_2$-condition. Although it is not necessary for our further results in this section, we will see that the complementary function $\Theta$ also satisfies $\Delta_2$-condition. To ease the computation, we fix $m_r=2-\frac{1}{2^r}$. Let $\theta(x,t)=xt-\Omega(x)$ be a function defined for $x,t\geq 0$. On interval $I_r=[2^r,2^{r+1}]$ the function $\Omega(x)$ takes the form $\Omega(x)=C_rx^{m_r+1}$, where $C_r$ is a constant. Let $t$ be fixed. The derivative of $\theta$ on the interval $I_r$ is  
		\begin{align*}
			\frac{d}{dx}\theta(x,t)=t-C_r(m_{r}+1)x^{m_r}.
		\end{align*}
		Thus, $\theta(x,t)$ attains a maximum value in interval $I_r$ at point $x_r(t)=\left(\frac{t}{C_r(m_r+1)}\right)^{\frac{1}{m_r}}$ if $x_r(t)\in I_r$ or on the end points of the interval. The maximum possible value of $\theta(x,t)$ on interval $I_r$ is \begin{eqnarray}\label{max}\Theta_r(t)=tx_r(t)-C_r(x_r(t))^{1+m_r}=\frac{m_r}{(1+m_r)^{\frac{1+m_r}{m_r}}}C_r^{-\frac{1}{m_r}}t^{\frac{1+m_r}{m_r}}\end{eqnarray}
		Now $\Theta(t)=\sup_{r\geq 0}\Theta_r(t)$. For a fixed $t$, the supremum is attained at some $r=r(t)$, i.e, $\Theta(t)=\Theta_{r(t)}(t)$. Now, consider the ratio $R(t)=\frac{\Theta(2t)}{\Theta(t)}=\frac{\Theta_{r(2t)}(2t)}{\Theta_{r(t)}(t)}$. Since, $\Theta_r(t)$ grows like $t^{\frac{m_r+1}{m_r}}$, the maximizing $r(t)$ increase with an increase in $t$. Hence,$$R(t)\leq \max\{\frac{\Theta_r(2t)}{\Theta_r(t)}: r\in \mathbb N\}\leq 2^{1+\frac{1}{m_r}} ~~~~~~~~~~~~~~~~~~~~~~(\text{due to} ~\ref{max})$$
		As $m_r\to 2$ for large values of $r$, we see that $R(t)\leq 2^{3/2}$ eventually. Thus the complementary function $\Theta$ satisfies $\Delta_2$-condition.\\
		
		\noindent {\bf Note:} We will also need the Mulholland Young function $\Omega_o$ associated to the asymptotic red line $y=2x-2\ln 2$ in the above figure. In-fact $\Omega_o(x)=|x|e^{2\ln |x|-2\ln2}=\frac{1}{4}|x|^3$
	\end{example}
	\begin{example}
		Let $\chi$ denote a function which is a combination of rotation of the graph of $f(x)=e^{-x}$ about origin by angle $\theta$ and appropriate translation such that the resulting curve has the  $y=x\tan\theta -c$ as an asymptote. Then $\Omega(x)=|x|e^{\chi(\ln |x|)}$ is an Young function which satisfies Mulholland condition and $\Delta_2$-condition.
	\end{example}
	Suppose $(X_i,\|.\|_i)$ are $F$- spaces for $i=1,2,..,n$ and $\Phi$ is a Young function which satisfies Mulholland condition. We consider the vector space  direct sum $\oplus_{i=1}^nX_i$. Further we define a metric $d_\Phi:\oplus_{i=1}^nX_i\times\oplus_{i=1}^nX_i\to [0,\infty)$ as $$d_\Phi\left((x_1,...,x_n),(y_1,...,y_n)\right)=\Phi^{-1}\left(\sum_{i=1}^n\Phi\left(\|x_i-y_i\|_i\right)\right).$$
	The associated $F$-norm is $\|(x_1,...,x_n)\|_F=\Phi^{-1}\left(\sum_{i=1}^n\Phi(\|x_i\|_i)\right)$.
	One can easily check that this is a well defined metric. $d_\Phi\left((x_1,...,x_n),(y_1,...,y_n)\right)=0$ if and only $x_i=y_i$ for each $i=1,...,n$. Due to Mulholland's condition on $\Phi$, the following Minkowsky type inequality (also known as \emph{Mulholland's inequality}) holds true (see \cite{Mulholland}[5, Th. 1]). 
	
	$$\Phi^{-1}\left(\sum_{i=1}^n \Phi(a_i+b_i)\right)\leq \Phi^{-1}\left(\sum_{i=1}^n \Phi(a_i)\right)+\Phi^{-1}\left(\sum_{i=1}^n \Phi(b_i)\right)$$ for all $a_i,b_i\geq 0$.
	And hence the triangle inequality follows for the $F$-norm on $\oplus_{i=1}^nX_i$ and hence for the metric $d_\Phi$. We denote this metric space by $\left(\oplus_{i=1}^n  X_i, d_\Phi\right)$.
	\begin{proposition}
		Metric space $\left(\oplus_{i=1}^nX_i, d_\Phi\right)$ is an $F$-space if $\Phi$ satisfies Mulholland condition.
	\end{proposition}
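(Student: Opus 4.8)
The plan is to verify the four defining properties of an $F$-space in turn. Two of them are already essentially recorded above: $d_\Phi$ satisfies the triangle inequality because Mulholland's inequality, applied to $a_i=\|x_i-y_i\|_i$ and $b_i=\|y_i-z_i\|_i$ together with the triangle inequality in each factor $X_i$ and the monotonicity of $\Phi$ and $\Phi^{-1}$, gives $d_\Phi(x,z)\le d_\Phi(x,y)+d_\Phi(y,z)$; and $d_\Phi$ is translation invariant since $d_\Phi\big((x_i)+(a_i),(y_i)+(a_i)\big)$ depends only on the differences $x_i-y_i$. Symmetry follows from $\|x_i-y_i\|_i=\|y_i-x_i\|_i$, while non-negativity and the identity of indiscernibles were checked above. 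So $d_\Phi$ is a translation-invariant metric, and it remains to establish continuity of the vector operations and completeness.

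The technical heart is the claim that the metric topology of $d_\Phi$ coincides with the product topology on $\oplus_{i=1}^n X_i$; equivalently, $d_\Phi(x^{(k)},x)\to 0$ if and only if $\|x_i^{(k)}-x_i\|_i\to 0$ for each $i$. The only input here is that, under Mulholland's condition, $\Phi$ is a continuous, strictly increasing bijection of $[0,\infty)$ onto itself (continuity forces finiteness, while $\Phi(0)=0$ and $\lim_{x\to\infty}\Phi(x)=\infty$ give surjectivity), so that $\Phi^{-1}$ is continuous with $\Phi^{-1}(0)=0$. For the forward implication I would apply the continuous map $\Phi$ to $d_\Phi(x^{(k)},x)\to 0$ to obtain $\sum_i\Phi(\|x_i^{(k)}-x_i\|_i)\to 0$; since the summands are non-negative, each $\Phi(\|x_i^{(k)}-x_i\|_i)\to 0$, and continuity of $\Phi^{-1}$ at $0$ forces $\|x_i^{(k)}-x_i\|_i\to 0$. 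The converse runs the same chain in reverse. A clean one-sided estimate for the write-up is $\|a_j\|_j\le\Phi^{-1}\big(\sum_i\Phi(\|a_i\|_i)\big)$, which holds because $\Phi(\|a_j\|_j)\le\sum_i\Phi(\|a_i\|_i)$ and $\Phi^{-1}$ is increasing.

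Granting this equivalence, the continuity statements are routine. For addition, if $x^{(k)}\to x$ and $y^{(k)}\to y$ then translation invariance and the triangle inequality give $d_\Phi(x^{(k)}+y^{(k)},x+y)\le d_\Phi(x^{(k)},x)+d_\Phi(y^{(k)},y)\to 0$. For scalar multiplication one observes that it acts coordinatewise and that each factor $(X_i,\|.\|_i)$ is an $F$-space, so $(\lambda,x)\mapsto\lambda x$ is continuous in every coordinate; continuity into the product $\oplus_{i=1}^n X_i$ then follows because a map into a finite product is continuous exactly when each of its coordinate maps is, and $d_\Phi$ induces the product topology. It is worth remarking that the possible failure of homogeneity of $\Phi$ plays no role here: it obstructs only the $F$-norm being a genuine (homogeneous) norm, not the continuity of the operations.

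Finally, for completeness I would show that a $d_\Phi$-Cauchy sequence is coordinatewise Cauchy: from $d_\Phi(x^{(k)},x^{(l)})<\varepsilon$ one gets $\Phi(\|x_i^{(k)}-x_i^{(l)}\|_i)\le\sum_i\Phi(\|x_i^{(k)}-x_i^{(l)}\|_i)\le\Phi(\varepsilon)$, hence $\|x_i^{(k)}-x_i^{(l)}\|_i\le\varepsilon$ by monotonicity of $\Phi$. Each coordinate sequence is therefore Cauchy in the complete space $X_i$ and converges to some $x_i$, and by the convergence equivalence above $x^{(k)}\to(x_1,\dots,x_n)$ in $d_\Phi$. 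I expect the topology-equivalence step to be the only one requiring genuine care; once the continuity of $\Phi$ and $\Phi^{-1}$ at the origin is invoked, the remaining verifications reduce to the corresponding facts in the factors $X_i$.
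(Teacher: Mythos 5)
Your proof is correct and follows essentially the same route as the paper's: triangle inequality from Mulholland's inequality, coordinatewise reduction via the continuity and strict monotonicity of $\Phi$ and $\Phi^{-1}$, and completeness inherited from the factors. Your explicit packaging of the key step as ``$d_\Phi$ induces the product topology'' is a slightly cleaner organization of the same argument the paper carries out piecewise for scalar multiplication, addition, and completeness.
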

	\begin{proof}
		We start by showing that the scalar multiplication continuous with respect to the metric $d_\Phi$. Suppose $\left\{K_\alpha,(x^{(\alpha)}_1,...,x^{(\alpha)}_n)\right\}_{\alpha\in \Gamma}$ be a net in $\mathbb C\times \oplus_{i=1}^nX_i$ converging to $\left\{K,(x_1,...,x_n)\right\}$. Then $||K_\alpha x_i^{(\alpha)}-Kx_i||_i\to 0$ for each $i$ due the continuity of scalar multiplications in each space $(X_i,\|.\|_i)$.
	Combining this with the fact that $\Phi$ is strictly increasing and continuous tells us that $$d_\Phi\left((K_\alpha x_1^{(\alpha)},...,K_\alpha x^{(\alpha)}_n),(Kx_1,...,Kx_n)\right)=\Phi^{-1}\left(\sum_{i=1}^n\Phi(\|K_\alpha x^{(\alpha)}_i-Kx_i\|_i)\right)\to 0.$$
		Thus the scalar multiplication is continuous with respect to the metric $d_\Phi$.\\
		Now we turn to prove that the addition in $\oplus_{i=1}^nX_i$ is continuous with respect to the metric $d_\Phi$. But this is evident from the fact that addition is a short map under the metric $d_\Phi$ and hence continuous. The fact that $d_\Phi$ is translation invariant follows from the fact that the $F$-norm, which is translation invariant, generates $d_\Phi$. Only thing remains to be verified is whether $\left(\oplus_{i=1}^n X_i,d_\Phi\right)$  is a complete metric space. To see this, let $\{(x_1^{(r)},...,x_n^{(r)})\}_{r=1}^\infty$ be a Cauchy sequence in $\left(\oplus_{i=1}^nX_i, d_\Phi\right)$. Then for each $\epsilon>0$, there exist a $N_\epsilon$ such that $d_\Phi((x_i^{(r)}),(x_i^{(s)}))<\epsilon$ for all $r,s>N_\epsilon$. Then
		\begin{align}
			d_\Phi\left((x_i^{(r)}),(x_i^{(s)})\right)&=\Phi^{-1}\left(\sum_{i=1}^n\Phi (\|x_i^{(r)}-x_i^{(s)}\|_i)\right)<\epsilon\nonumber
			\end{align}
			for all $r,s>N_{\epsilon}$. Hence, due to $\Phi$ being strictly increasing, we have $\sum_{i=1}^n\Phi(\|(x_i^{(r)})-x_i^{(s)}\|_i)<\Phi(\epsilon)$ for all $r,s>N_{\epsilon}$. Hence for each $i$, again due to strictly increasing nature of $\Phi$, we have $\|x_i^{(r)}-x_i^{(s)}\|_i<\Phi^{-1}\Phi(\epsilon)=\epsilon$ for all $r,s>N_\epsilon$. Thus, for each $i$, the sequence $\{x_i^{(r)}\}_{r=1}^\infty$ is a Cauchy sequence in the $F$-space $X_i$. Thus there exists $x_i\in X_i$ for each $i=1,2,..,n$ such that $\{x_i^{(r)}\}\overset{\|.\|_i}{\to} x_i$ for each $i=1,2,..,n$. Now, we claim that $\{(x_1^{(r)},...,x_n^{(r)})\}\overset{d_\Phi}{\to}(x_1,...,x_n)$. But this is the easy consequence of the continuity of $\Phi$ and $\Phi^{-1}$. Hence, $\left(\oplus_{i=1}^nX_i,d_\Phi\right)$ is an $F$-space.
	\end{proof}
	
	As one might have noticed, for Banach spaces $X_1,..,X_n$, the only thing preventing $\left(\oplus_{i=1}^nX_i,d_\Phi\right)$ from being a Banach space is the homogeneity of $F$-norm $x\mapsto d_\Phi(0,x)$ with respect to scalar multiplication. In-fact the $F$-norm is homogeneous if $\Phi(x)=|x|e^{\chi(\ln |x|)}$, where $\chi(x)=ax+b$ is any affine function. On the contrary if $\Phi$ is of the type constructed in example \ref{example}, then it is easy to see that the $F$-norm on $\oplus_{i=1}^nX_i$ is not homogeneous with respect to scalar multiplication (the ever changing growth of $\Phi$ would not permit homogeneity, and it also follows from the next result).	Consider the set $\mathcal W=\left\{V~:~V\prec \oplus_{i=1}^nX_i,~ \|\alpha(x_i)\|_F=|\alpha|.\|(x_i)\|_F~\forall \alpha\in \mathbb C, (x_i)\in \oplus_{i=1}^nX_i  \right\}$. Define the natural ordering on $\mathcal W$ induced by inclusion i.e $V_1\prec V_2$ if $V_1$ is a subspace of $V_2$. Then $\left(\mathcal W, \prec\right)$ becomes a partially ordered set. Further, $\mathcal W$ is non-empty because $V=\{(x,0,..,0)~:~ x\in X_1\}\in \mathcal W$. If $\{W_\alpha\}$ is a  chain in $\mathcal W$, then $\cup W_\alpha$ is its upper bound. Hence, by Zorn's lemma the family $\mathcal W$ has at-least one maximal member. Clearly, the maximal member of $\mathcal W$  will be closed with respect to the metric $d_\Phi$. Hence, the maximal member of $\mathcal W$ will be a Banach space with respect to the norm $\|.\|_F$. We will see that for $\Phi$ constructed in example \ref{example}, each space $X_i$ can be identified isometrically (w.r.t $d_\Phi$) with a maximal member of $\mathcal W$.
	\begin{definition}
		A locally bounded $F$-space $(X,d)$ is called a $p$-interpolation $F$-space for Banach spaces $X_1$, $X_2$ and a $p\in [1,\infty)$ if
		\begin{enumerate} 
			\item there exists a distance preserving linear maps $i_1:X_1\to X$ and $i_2:X_2\to X$ such that $i_1(X_1)$ and $i_2(X_2)$ are maximal Banach subspaces in $X$.
			\item the Banach envelope of $(X,d)$ is the $p$-direct sum  $X_1\oplus_p X_2$.
		\end{enumerate}
	\end{definition}
	\begin{theorem}
		If $\Omega$ is the Young function from example \ref{example} and $(X_i,\|.\|_i)$ be Banach spaces for $i=1,2,...,n$. Then each Banach space  $\left(X_i,\|.\|_i\right)$ is isometrically isomorphic to a maximal Banach subspace of the $F$-space $\left(\oplus_{i=1}^nX_i,d_\Omega\right)$.
	\end{theorem}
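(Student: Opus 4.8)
The plan is to realize each $X_k$ as the $k$-th coordinate axis. Define $i_k:X_k\to\oplus_{i=1}^nX_i$ by $i_k(x)=(0,\dots,0,x,0,\dots,0)$ with $x$ in the $k$-th slot. This map is linear, and since only one summand is nonzero we get
$$\|i_k(x)\|_F=\Omega^{-1}\bigl(\Omega(\|x\|_k)\bigr)=\|x\|_k,$$
so $i_k$ is distance preserving for $d_\Omega$; in particular $i_k(X_k)$ is complete (hence closed) because $X_k$ is. The same one-term collapse gives $\|\alpha\, i_k(x)\|_F=\|\alpha x\|_k=|\alpha|\,\|x\|_k=|\alpha|\,\|i_k(x)\|_F$, so the $F$-norm is homogeneous on $i_k(X_k)$ and therefore $i_k(X_k)\in\mathcal W$. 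Thus $i_k(X_k)$ is a Banach subspace isometric to $X_k$, and the whole theorem reduces to showing $i_k(X_k)$ is a \emph{maximal} element of $\mathcal W$. I would reduce this further to the single assertion that homogeneity must fail off the axes.

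The heart of the argument is the following claim, which I expect to be the main obstacle: if $w=(w_1,\dots,w_n)$ has at least two nonzero components, then $\|2w\|_F<2\|w\|_F$ strictly. To prove it, introduce the doubling ratio $D(x)=\Omega(2x)/\Omega(x)$ for $x>0$. Using $\Omega(x)=|x|e^{\chi(\ln|x|)}$ one computes $D(x)=2\,e^{\chi(\ln x+\ln 2)-\chi(\ln x)}$, and since the characteristic function $\chi$ of example \ref{example} is piecewise linear with \emph{strictly} increasing slopes $m_r$ on windows of width exactly $\ln 2$, the increment $\chi(u+\ln 2)-\chi(u)$ is strictly increasing in $u$; hence $D$ is strictly increasing on $(0,\infty)$. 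Now set $S=\Omega^{-1}\bigl(\sum_i\Omega(\|w_i\|_i)\bigr)=\|w\|_F$. Because at least two terms are positive, $\Omega(S)=\sum_i\Omega(\|w_i\|_i)>\Omega(\|w_j\|_j)$ for every $j$, so $S>\max_i\|w_i\|_i$ and therefore $D(S)>D(\|w_i\|_i)$ for each $i$. Consequently
$$\Omega(2S)=D(S)\sum_i\Omega(\|w_i\|_i)>\sum_i D(\|w_i\|_i)\,\Omega(\|w_i\|_i)=\sum_i\Omega(2\|w_i\|_i),$$
and applying the increasing bijection $\Omega^{-1}$ yields $2S=2\|w\|_F>\|2w\|_F$, as claimed.

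Finally I would deduce maximality. Taking $\alpha=2$, the claim shows that any vector with two or more nonzero components violates the homogeneity requirement defining $\mathcal W$, so no member of $\mathcal W$ can contain such a vector. Suppose $V\in\mathcal W$ with $i_k(X_k)\subseteq V$, and pick $v\in V$; if $v\notin i_k(X_k)$ then $v$ has a nonzero component $v_j$ with $j\neq k$, and choosing any $x\neq 0$ in $X_k$ the element $v+i_k(x)\in V$ has nonzero $j$-th and $k$-th components, contradicting the previous sentence. Hence $V=i_k(X_k)$, proving that $i_k(X_k)$ is maximal in $\mathcal W$; by the remark preceding the theorem it is then a maximal Banach subspace, and the isometry $i_k$ completes the proof. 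The only genuinely delicate point is the strictness in the displayed inequality, which rests entirely on the strict convexity (strictly increasing slopes $m_r$) of $\chi$ forcing $D$ to be \emph{strictly} monotone; for an affine $\chi$, i.e.\ a power Young function, $D$ is constant and the inequality degenerates to the equality that makes $\|\cdot\|_F$ a genuine norm.
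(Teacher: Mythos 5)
Your proposal is correct, and its core mechanism is genuinely different from the paper's. The embedding step (coordinate axes, one-term collapse of the $F$-norm, closedness, membership in $\mathcal W$) matches the paper's setup, and like the paper you reduce maximality to the failure of homogeneity at any vector with two nonzero components. But where the paper argues asymptotically --- letting $\alpha\to\infty$ in the homogeneity identity and invoking the $\Delta_2$-condition together with the cubic asymptote $\Omega(t)\approx\tfrac14 t^3$ to force a contradiction --- you prove the sharp pointwise inequality $\|2w\|_F<2\|w\|_F$ at the single scalar $\alpha=2$, resting entirely on the strict monotonicity of the doubling ratio $D(x)=\Omega(2x)/\Omega(x)$, which in turn follows from the strictly increasing slopes $m_r$ of $\chi$ (indeed $\chi(u+\ln 2)-\chi(u)$ is piecewise linear in $u$ with slope $m_{r+1}-m_r>0$, and one checks this persists on the negative axis by evenness). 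Your route buys several things: it isolates the exact hypothesis needed (strict increase of the $m_r$, no asymptote required, so it covers unbounded slope sequences too); it degenerates transparently to equality in the affine/power case, explaining why those $\Phi$ give genuine norms; and it cleanly handles the case where the witness vector $v\in V\setminus i_k(X_k)$ has vanishing $k$-th coordinate via the perturbation $v+i_k(x)$ --- a case the paper's proof passes over when it normalizes $\|x\|_1=\|y\|_2=1$. The paper's asymptotic computation, for its part, connects the failure of homogeneity to the envelope norm $\|\cdot\|_{\Omega_o}$ used later. The only cosmetic point to tidy is that $D(0)$ is undefined; you should simply drop the indices with $w_i=0$ from the sum, since those terms vanish on both sides of your displayed inequality.
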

	\begin{proof}
		To simplify the computations, we shall prove it for $n=2$. The general case follows in similar fashion. Consider the natural embedding $\theta:(X_1, \|.\|_1)\to \left(X_1\oplus X_2, d_\Omega\right)$ given by $\theta(x)=(x,0)$. This is clearly an isometry (w.r.t metric $d_\Omega$).  We claim that $\theta(X_1)$ is a maximal Banach subspace of the $F$-space $\left(X_1\oplus X_2,d_\Omega\right)$. To prove this, suppose  on the contrary, that $\theta(X_1)$ is not maximal. Then there exist $W\in \mathcal W$ such that $\theta(X_1)\prec W$. Hence, there exists a $(x,y)\in W\setminus \theta(X_1)$ such that \begin{equation}\label{homogeneous}\Omega^{-1}\left(\Omega(\|\alpha x\|_1)+\Omega(\|\alpha y\|_2)\right)=|\alpha|\Omega^{-1}\left(\Omega(\|x\|_1)+\Phi(\|y\|_2)\right)~~~~\forall \alpha\in \mathbb C.\end{equation}  Without loss of generality, we can choose $(f,g)$ to be such that $u=\|x\|_1= 1$ and $v= \|y\|_2= 1$. Thus due to equation \ref{homogeneous}, we have \begin{equation}\label{homogeneouso}
			\frac{\Omega\left(2|\alpha|\Omega^{-1}(\Omega(u)+\Omega(v))\right)}{	\Omega(2|\alpha| u)+\Omega(2|\alpha| v)}=1~~~~\forall \alpha\in \mathbb C
		\end{equation}
		Hence, 
		\begin{align}\label{homegeneous1}
	\frac{\Omega\left(2|\alpha|\Omega^{-1}(\Omega(u)+\Omega(v))\right)}{	\Omega(2|\alpha| u)+\Omega(2|\alpha| v)}&=	\frac{\Omega\left(2|\alpha|\Omega^{-1}(\Omega(u)+\Omega(v))\right)}{|\alpha|\Omega^{-1}(\Omega(u)+\Omega(v))}\frac{|\alpha|\Omega^{-1}(\Omega(u)+\Omega(v))}{	\Omega(2|\alpha| u)+\Omega(2|\alpha| v)}\nonumber\\
	&\leq M\frac{|\alpha|}{\Omega(2|\alpha| u)+\Omega(2|\alpha| v)}~~~~~~~\forall \alpha\in \mathbb C~\text{large enough}&& (\because \Omega~\text{is}~\Delta_2)
	\end{align}
	Now notice that for large values of $t$ the function $\Omega(t)\approx \frac{1}{4}t^3$. Hence, $\Omega(2|\alpha|u)\approx 2|\alpha^3|u^3$ and $\Omega(2|\alpha|v)\approx 2|\alpha^3|v^3$. For large values of $\alpha$.
	Thus from the equation \ref{homegeneous1}, we have $$\lim_{\alpha\to \infty}\frac{\Omega\left(2|\alpha|\Omega^{-1}(\Omega(u)+\Omega(v))\right)}{	\Omega(2|\alpha| u)+\Omega(2|\alpha| v)}\leq \lim_{\alpha\to \infty}M\frac{|\alpha|}{2|\alpha|^3(u^3+v^3)}=0,$$
	which is a clear contradiction to the equation \ref{homogeneouso}.\\
	
	Hence, $\theta(X_1)$ is a maximal Banach subspace of the $F$-space $(X_1\oplus X_2, d_\Omega)$.
	\end{proof}
	\begin{remark}
		The above phenomenon is peculiar for the special Young function $\Omega$ constructed in example \ref{example}. For Young functions of the type $\Phi(x)=cx^m$, the subspace $X_i$ is not a maximal Banach subspace in $(X_1\oplus X_2,d_\Phi)$ because the latter is a Banach space itself in such case.
	\end{remark}
	
	In the preceding theorem, we have interpolated the $F$-space $(X_1\oplus X_2,d_\Omega)$ from inside through maximal Banach space. Now in the next result we interpolate it from the outside i.e we compute its Banach envelope. Recall from example \ref{example}, the Young function $\Omega_o$ whose characteristic function was an asymptote to the characteristic function of $\Omega$. We now demonstrate an interesting phenomena which outlines that a Banach envelope of an $F$-space associated to a Young function $\Omega$ with Mulholland condition is nothing but the space associated to the Young function whose characteristic function is an asymptote to the characteristic function of $\Omega$. Since, $(X_1\oplus X_2,d_\Omega)$ has a separating dual and is locally bounded (see theorem \ref{locb}), we have the following result.
	\begin{theorem}
		If $X_1$ and $X_2$ are Banach spaces, then the Banach envelope of the $F$-space $(X_1\oplus X_2,d_\Omega)$ is the Banach space $(X_1\oplus X_2,d_{\Omega_o})$.
	\end{theorem}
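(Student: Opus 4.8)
The plan is to identify the Banach envelope through its defining gauge, reducing the statement to the single geometric identity $\overline{co}(B_\Omega)=B_{\Omega_o}$, where $B_\Omega=\{z\in X_1\oplus X_2:\|z\|_\Omega\le 1\}$ and $B_{\Omega_o}$ is the corresponding unit ball for $\Omega_o$. Two preliminary observations make this a clean target. First, since the characteristic function $\chi_o(s)=2s-2\ln 2$ of $\Omega_o$ is affine, the associated $F$-norm is homogeneous, so $\|\cdot\|_{\Omega_o}$ is a genuine norm and $B_{\Omega_o}$ is closed and convex; concretely $\|(x_1,x_2)\|_{\Omega_o}=(\|x_1\|_1^3+\|x_2\|_2^3)^{1/3}$. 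Second, the hypotheses needed to run the envelope machinery are in place: the space is locally bounded (theorem \ref{locb}) and has separating dual, so the gauge $\|\cdot\|_C$ of $\overline{co}(B_\Omega)$ is a norm and, by \cite[Ch.2 Sec. 4]{Kalton}, the dual of $(X_1\oplus X_2,d_\Omega)$ coincides with that of its envelope.

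First I would establish the pointwise comparison $\Omega(t)\ge\Omega_o(t)$ for all $t\ge 0$. This is the analytic content of ``a convex increasing $\chi$ lies above its asymptote of slope $2$'': the map $s\mapsto\chi(s)-2s$ is nonincreasing (its derivative $\chi'(s)-2$ is negative) with limit $-2\ln 2$, hence $\chi(s)-2s\ge -2\ln 2$, i.e. $\chi\ge\chi_o$. From this same monotonicity one checks that $H=\Omega\circ\Omega_o^{-1}$ satisfies $H(s)/s$ nonincreasing, so $H$ is subadditive, and since $H(\Omega_o(a))=\Omega(a)$ this yields $\Omega(G)\le\Omega(a)+\Omega(b)$ with $G=\|(x_1,x_2)\|_{\Omega_o}$, i.e. $\|\cdot\|_{\Omega_o}\le\|\cdot\|_\Omega$. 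Consequently $B_\Omega\subseteq B_{\Omega_o}$, and as $B_{\Omega_o}$ is closed and convex, $\overline{co}(B_\Omega)\subseteq B_{\Omega_o}$; passing to gauges gives $\|\cdot\|_C\ge\|\cdot\|_{\Omega_o}$, one half of the claim. It is worth noting that the exponent $3$ in $\Omega_o$ is forced precisely because the \emph{slope-$2$} asymptote is what makes $H(s)/s$ monotone; this is the structural reason the asymptotic line governs the envelope.

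For the reverse inequality $\|\cdot\|_C\le\|\cdot\|_{\Omega_o}$, equivalently $B_{\Omega_o}\subseteq\overline{co}(B_\Omega)$, the heuristic that singles out $\Omega_o$ is a scaling computation: because $\|\cdot\|_\Omega$ is subadditive, Fekete's lemma applied to $t\mapsto\|tz\|_\Omega$ shows that the largest seminorm dominated by the $F$-norm equals $\lim_{t\to\infty}\|tz\|_\Omega/t$, and using $\Omega(s)\sim\Omega_o(s)$ together with $\Omega^{-1}(S)\sim\Omega_o^{-1}(S)$ one gets $\tfrac1t\,\Omega^{-1}\!\big(\Omega(ta)+\Omega(tb)\big)\to (a^3+b^3)^{1/3}=\|(x_1,x_2)\|_{\Omega_o}$, with $a=\|x_1\|_1$, $b=\|x_2\|_2$. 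This identifies $\Omega_o$ unambiguously as the correct target. However, it only recovers the inequality already obtained, so it does \emph{not} by itself close the argument; what must actually be proved is the containment $B_{\Omega_o}\subseteq\overline{co}(B_\Omega)$, namely that every boundary point of the $\Omega_o$-ball is a limit of convex combinations of points of the $F$-ball.

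This reverse containment is where I expect the genuine difficulty to lie, and it is delicate for a pointed reason: the convex hull can reach $B_{\Omega_o}$ only if $B_\Omega$ fails to be convex, so that $\overline{co}(B_\Omega)$ strictly enlarges it, and this non-convexity must be driven entirely by the non-homogeneity of $\|\cdot\|_\Omega$. I would attack the containment in one of two ways. The direct route is to exhibit, for a prescribed boundary point of $B_{\Omega_o}$, explicit convex combinations of points of $B_\Omega$ converging to it, exploiting the near-$\ell^3$ profile of the level sets at large scale. The cleaner route, which I would try first, is a duality/polar argument: using the complementary Young function to describe the dual furnished by \cite{Kalton}, compute the polar of $B_\Omega$ in that pairing and verify it agrees with the polar of $B_{\Omega_o}$; since polars are unchanged under passage to the closed convex hull, $(B_\Omega)^\circ=(B_{\Omega_o})^\circ$ would give $\overline{co}(B_\Omega)=B_{\Omega_o}$ at once. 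In either approach the crux — and the main obstacle — is controlling how the convex hull of the non-homogeneous $F$-ball spreads out to the homogeneous $\Omega_o$-ball; the comparison $\Omega\ge\Omega_o$ of the previous step and the evaluation of the scaling limit are comparatively routine once the asymptote $\chi_o$ has been pinned down.
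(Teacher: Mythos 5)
Your first half is sound and in fact sharper than the paper's own treatment of that direction: the pointwise bound $\Omega\ge\Omega_o$, the subadditivity of $H=\Omega\circ\Omega_o^{-1}$ obtained from the monotonicity of $H(s)/s$, and the resulting inequality $\|\cdot\|_{\Omega_o}\le\|\cdot\|_\Omega$ give $\overline{co}(B_\Omega)\subseteq B_{\Omega_o}$ and hence $\|\cdot\|_C\ge\|\cdot\|_{\Omega_o}$ cleanly, whereas the paper only asserts vaguely that the unique homogeneous convex function sandwiched between $\Omega$ and $\Omega_o$ is $\Omega_o$. The gap is the other half, which you explicitly do not prove: you reduce it to $B_{\Omega_o}\subseteq\overline{co}(B_\Omega)$ and offer two strategies without executing either. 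Worse, that containment is false, and you come within one line of seeing why: you correctly observe that the convex hull can strictly enlarge $B_\Omega$ only if $B_\Omega$ fails to be convex, but you never check this. Since $\Omega$ is a Young function it is convex and nondecreasing on $[0,\infty)$, so $(x_1,x_2)\mapsto\Omega(\|x_1\|_1)+\Omega(\|x_2\|_2)$ is convex and $B_\Omega=\{(x_1,x_2):\Omega(\|x_1\|_1)+\Omega(\|x_2\|_2)\le\Omega(1)\}$ is already closed and convex. Hence $\overline{co}(B_\Omega)=B_\Omega$ and the envelope norm is the Minkowski gauge $\gamma(x_1,x_2)=\inf\{\lambda>0:\Omega(\|x_1\|_1/\lambda)+\Omega(\|x_2\|_2/\lambda)\le\Omega(1)\}$. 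On the diagonal $\|x_1\|_1=\|x_2\|_2=a$ this equals $a/\Omega^{-1}(\Omega(1)/2)$ while $\|(x_1,x_2)\|_{\Omega_o}=2^{1/3}a$; these coincide only if $\Omega(2^{-1/3})=\Omega(1)/2$, which fails for the constructed $\Omega$ (it holds only when $\Omega$ agrees with a cubic on $[0,1]$, and the whole point is that it does not). So the reverse inclusion cannot be rescued by either of your proposed routes: the polar of $B_\Omega$ is the polar of a convex set and simply returns $B_\Omega$, not $B_{\Omega_o}$, and your first-half inequality is in fact strict at some points.

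A second, related slip: your Fekete computation correctly evaluates $q(z)=\lim_{t\to\infty}\|tz\|_\Omega/t=\|z\|_{\Omega_o}$ and correctly identifies $q$ as the largest seminorm dominated pointwise by $\|\cdot\|_\Omega$, but concluding $\|\cdot\|_C\le q$ from this would require $\|\cdot\|_C\le\|\cdot\|_\Omega$ pointwise, which fails for a non-homogeneous $F$-norm with convex balls: $\gamma(z)>\|z\|_\Omega$ whenever $z/\|z\|_\Omega\notin B_\Omega$, which happens here. For comparison, the paper proves this direction by a scaling argument: with $\lambda=(\|kx_1\|^3+\|kx_2\|^3)^{1/3}$ it claims $\|(kx_1,kx_2)/\lambda\|_\Omega=1+\epsilon_k\to1$ and then invokes homogeneity of the gauge; but $(kx_1,kx_2)/\lambda=(x_1,x_2)/\|(x_1,x_2)\|_{\Omega_o}$ does not depend on $k$ at all, and its $\Omega$-norm is a fixed number which, by your own first-half inequality, is $\ge1$ and generally $>1$, so that step does not go through either. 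Your instinct that this direction is where the difficulty lies was correct; the honest conclusion is that, with the paper's definition of the Banach envelope, the envelope of $(X_1\oplus X_2,d_\Omega)$ is $(X_1\oplus X_2,\gamma)$, which is isomorphic but not isometric to $(X_1\oplus X_2,d_{\Omega_o})$.
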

	\begin{proof}
		Recall that $\|(x_1,x_2)\|_{\Omega_o}=d_{\Omega_o}\left((0,0),(x_1,x_2)\right)=\left(\|x_1\|^3+\|x_2\|^3\right)^{1/3}$. We just have to prove that the gauge norm is $p(x_1,x_2)=\inf\left\{\lambda>0:~\frac{(x_1,x_2)}{\lambda}\in co(B)\right\}=\left(\|x_1\|^3+\|x_2\|^3\right)^{1/3}$, where $co(B)$ denotes the closed convex hull of the unit ball of $(X_1\oplus X_2, d_\Omega)$. Let $k>0$ be very large. Further let $\lambda=\left(\|kx_1\|^3+\|kx_2\|^3\right)^{1/3}$. Then
		\begin{align*}
			\left\vert\left\vert\frac{(kx_1,ky_2)}{\lambda}\right\vert\right\vert_\Omega&=\Omega^{-1}\left(\Omega\left(\frac{\|kx_1\|}{\lambda}\right)+\Omega\left(\frac{||kx_2||}{\lambda}\right)\right)\\&\leq \Omega_o^{-1}\left(\Omega\left(\frac{\|kx_1\|}{\lambda}\right)+\Omega\left(\frac{\|kx_2\|}{\lambda}\right)\right)&&\because \Omega^{-1}\leq \Omega_o^{-1}~\text{eventually}\\&\leq \Omega_o^{-1}\left(\frac{1}{\lambda}\Omega(\|kx_1\|)+\frac{1}{\lambda}\Omega(\|kx_2\|)\right)&&\because \Omega~\text{is convex and}~\lambda~\text{is very large}\\&\approx\Omega_o^{-1}\left(\frac{1}{4\lambda}\|kx_1\|^3+\frac{1}{4\lambda}\|kx_2\|^3\right)&&\because \Omega(t)=\frac{1}{4}t^3~\text{for large}~t\\\therefore\left\vert\left\vert\frac{(kx_1,ky_2)}{\lambda}\right\vert\right\vert_\Omega&=\frac{1}{\lambda}\left(\|kx_1\|^3+\|kx_2\|^3\right)^{1/3}+\epsilon_k\\&=1+\epsilon_k
		\end{align*}
		where, $\epsilon_k\to 0$ as $k\to \infty$. Thus, $p(kx_1,kx_2)\leq (1+\epsilon_k)\lambda$ for large $k$. But $p$ is a norm and hence $p(kx_1,kx_2)=|k|p(x_1,x_2)$ for all $k$. Hence, $p(x_1,x_2)\leq \left(\|x_1\|^3+\|x_2\|^3\right)^{1/3}=\|(x_1,x_2)\|_{\Omega_o}$. To prove the reverse inequality, fix $a\in X_1$ and $b\in X_2$. Then, define a  function $p_o:\mathbb R^2\to [0,\infty)$ as $p_o(s,t)=p(sa,tb)$. This is a well defined function and it can be easily verified that this is convex and homogeneous with respect to scalar multiplication on $\mathbb R^2$. If $p$ is a norm other than $\|.\|_{\Omega_o}$, then the associated convex functions $p_o$ should be sandwiched between  $p_{\Omega_o}$ and $p_\Omega$, where $p_{\Omega_o}(s,t)=\Omega_o(\sqrt{s^2+t^2})$ and $p_\Omega(s,t)=\Omega(\sqrt{s^2+t^2})$. But the only homogeneous convex functions sandwiched between $\Omega_o$ and $\Omega$ is $\Omega_o$ itself (since its characteristic function is asymptote to $\Omega$). Hence, $(X_1\oplus X_2,d_{\Omega_o})$ is the Banach envelope of the $F$-space $\left(X_1\oplus X_2, d_\Omega\right)$.
	\end{proof}
	\noindent{\bf Summary:} We have so far worked with $\Omega$, where the sequence of slopes for characteristic function is assumed to be $m_r=2-\frac{1}{2^r}$ for the ease of computation. But all of the above results holds in a general case as well. The following result captures the essence of if in general setup.
	\begin{theorem}
Let $X_1$ and $X_2$ be two Banach spaces and $\left(X_1\oplus X_2,\|.\|_p\right)$ be their $p$-direct sum for some $1\leq p<\infty$. Then there exists a $p$-interpolation $F$-space $\left(X_1\oplus X_2,\|.\|_\Omega\right)$ which contains $X_1$ and $X_2$ as maximal Banach subspaces and the Banach envelope of $\left(X_1\oplus X_2,\|.\|_\Omega\right)$ is the $p$-direct sum  $\left(X_1\oplus X_2,\|.\|_p\right)$.
	\end{theorem}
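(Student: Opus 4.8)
The plan is to reduce the general statement to the concrete computations already carried out for the cubic case, by reusing the construction of Example~\ref{example} with the single change that the limiting slope of the characteristic function is tuned to the prescribed exponent. Concretely, I would fix a strictly increasing sequence $\{m_r\}_{r\ge 0}$ of positive reals with $m_r\nearrow M:=p-1$ and with summable deficits $\sum_{r\ge 0}(M-m_r)<\infty$, and let $\chi$ be the even, continuous, convex, piecewise-linear function whose slope on $[r\ln 2,(r+1)\ln 2]$ equals $m_r$; then set $\Omega(t)=|t|e^{\chi(\ln|t|)}$. The telescoping estimate of Example~\ref{example} applies verbatim and gives $\Omega(2t)/\Omega(t)\le 2^{M+1}=2^{p}$ for all large $t$, so $\Omega$ satisfies both Mulholland's condition and the $\Delta_2$-condition. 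By the direct-sum Proposition, $(X_1\oplus X_2,\|\cdot\|_\Omega)$ is then an $F$-space; it is locally bounded with separating dual (Theorem~\ref{locb}) but, since $\chi$ is not affine, the $F$-norm is not homogeneous and the space is not Banach.

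I would next identify the linear asymptote of $\chi$. Because $\chi$ is convex with $\chi'\nearrow M$, the function $\chi(x)-Mx$ is decreasing, and the summability of the slope deficits guarantees that it converges to a finite limit $b$; hence $\chi_o(x):=Mx+b=(p-1)x+b$ is the asymptote and $\chi\ge\chi_o$ everywhere. The associated Mulholland function is $\Omega_o(t)=|t|e^{(p-1)\ln|t|+b}=e^{b}|t|^{p}$, and a direct computation shows that the induced $F$-norm is \emph{exactly} the $p$-direct sum norm, because the constant $e^{b}$ cancels: $\|(x_1,x_2)\|_{\Omega_o}=\Omega_o^{-1}\bigl(\Omega_o(\|x_1\|)+\Omega_o(\|x_2\|)\bigr)=(\|x_1\|^{p}+\|x_2\|^{p})^{1/p}$. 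Thus $(X_1\oplus X_2,\|\cdot\|_{\Omega_o})=(X_1\oplus X_2,\|\cdot\|_p)$ isometrically, which is exactly the envelope predicted by the definition of a $p$-interpolation $F$-space.

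With $\Omega$ and $\Omega_o$ fixed, the two structural clauses follow by repeating the proofs of the two preceding theorems with the exponent $3$ replaced by $p$ throughout. For the maximal-subspace clause I would use the $d_\Omega$-isometric embedding $\theta(x)=(x,0)$ (and symmetrically $(0,y)$ for $X_2$), assume toward a contradiction that some $(x,y)$ with $\|x\|_1=\|y\|_2=1$ enlarges $\theta(X_1)$ inside the family $\mathcal W$ of subspaces on which $\|\cdot\|_\Omega$ is homogeneous, and observe that the $\Delta_2$-bound together with the asymptotic law $\Omega(t)\approx e^{b}t^{p}$ forces the homogeneity ratio to tend to $0$ as $|\alpha|\to\infty$, contradicting the defining equation of $\mathcal W$. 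For the envelope clause I would compute the gauge norm of the closed convex hull of the unit ball as before: the inequality $\Omega^{-1}\le\Omega_o^{-1}$ (valid eventually because $\chi\ge\chi_o$), convexity of $\Omega$, and $\Omega(t)\approx e^{b}t^{p}$ yield $p(x_1,x_2)\le\|(x_1,x_2)\|_p$, while the reverse inequality follows from the sandwiching principle that the only homogeneous convex function lying between $\Omega_o$ and $\Omega$ is $\Omega_o$ itself, since $\chi_o$ is the asymptote of $\chi$. Together these verify both conditions in the definition of a $p$-interpolation $F$-space.

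The step I expect to be the main obstacle is the endpoint $p=1$: the recipe needs $M=p-1>0$ so that the slopes $m_r$ can be chosen positive and strictly increasing, and no convex, strictly increasing characteristic function can have asymptotic slope $0$. Hence the genuine content lies in the range $1<p<\infty$, and the case $p=1$ should be read as the degenerate limit in which the $\ell^{1}$-sum is already a Banach space and the construction collapses. A secondary technical point is to control the \say{eventually} qualifiers: one must check that the error in $\Omega(t)\approx e^{b}t^{p}$ (equivalently, the convergence $\chi(x)-\chi_o(x)\to 0$) is strong enough that the gauge-norm estimate survives passage to the homogenized limit, with the correction terms $\epsilon_k\to 0$ exactly as in the proof of the envelope theorem above.
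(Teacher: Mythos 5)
Your proposal follows the paper's own proof essentially verbatim: the paper likewise constructs $\Omega$ by taking a piecewise-linear characteristic function with slopes $m_r\nearrow p-1$ and asymptote $\chi_p(x)=(p-1)x-\ln p$, and then invokes the two preceding theorems with the exponent $3$ replaced by $p$. Your write-up is in fact more careful than the paper's two-line sketch --- in particular, your observation that the recipe degenerates at $p=1$ (no strictly increasing sequence of positive slopes can converge to $p-1=0$, and the homogeneity-ratio argument needs $|\alpha|/|\alpha|^p\to 0$) is a genuine caveat that the paper's proof glosses over.
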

	\begin{proof}
		Consider the Young function $\Omega_p=\frac{1}{p}|x|^p$. Then its characteristic function is $\chi_p(x)=(p-1)x-\ln p$. Now choose a increasing sequence $\{m_r\}_{r=1}^\infty$ of positive real numbers such that $m_r\to p-1$. Construct a continuous even function $\chi:\mathbb R\to \mathbb R$ of piece-wise straight lines of slope $m_r$ and such that $\chi$ has $\chi_p$ as an asymptote (this is always possible). Then the $F$-space $\left(X_1\oplus X_2,d_{\Omega}\right)$ associated to the Young function $\Omega(x)=|x|e^{\chi(\ln |x|)}$ is the required interpolation space.
	\end{proof}
	\begin{corollary}
		Let $\Omega$ be an Young function with Mulholland condition such that the line $\chi_p(x)=(p-1)x-\ln p$ is asymptote to $\Omega$ (as constructed in preceding theorem for $1\leq p<\infty$). Then the dual of the $F$-space $\left(X_1\oplus X_2,d_\Omega\right)$ is the Banach space $\left(X_1^*\oplus X_2^*, \|.\|_q\right)$, where $\frac{1}{p}+\frac{1}{q}=1$..
	\end{corollary}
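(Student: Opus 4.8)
The plan is to derive this corollary by assembling two facts that are already available in the preceding development: the identification of the Banach envelope furnished by the preceding theorem, together with the invariance of the dual space under passage to the Banach envelope (Theorem~1, due to Kalton). Once these are in place, the entire content reduces to the classical computation of the dual of a $p$-direct sum.

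First I would invoke the preceding theorem, which asserts that the Banach envelope of the $F$-space $\left(X_1\oplus X_2, d_\Omega\right)$ is precisely the $p$-direct sum $\left(X_1\oplus X_2, \|\cdot\|_p\right)$, where $p$ is determined by the asymptote $\chi_p(x)=(p-1)x-\ln p$ of the characteristic function of $\Omega$. In particular the $F$-space has a separating dual (this is exactly what makes the Banach envelope well defined), so Theorem~1 applies and tells us that the dual of $\left(X_1\oplus X_2, d_\Omega\right)$, i.e.\ the space of continuous linear functionals with respect to the $F$-norm, is isometrically isomorphic to the dual of its Banach envelope $\left(X_1\oplus X_2, \|\cdot\|_p\right)$.

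It therefore suffices to identify $\left(X_1\oplus_p X_2\right)^*$. I would show that every continuous linear functional $\phi$ on $X_1\oplus_p X_2$ decomposes as $\phi(x_1,x_2)=f(x_1)+g(x_2)$, where $f=\phi\circ i_1\in X_1^*$ and $g=\phi\circ i_2\in X_2^*$ are the compositions of $\phi$ with the canonical inclusions. To compute $\|\phi\|$, I would first maximize over the directions of $x_1$ and $x_2$ while holding the norms $a=\|x_1\|_1$ and $b=\|x_2\|_2$ fixed, which reduces the defining supremum to
$$\|\phi\| = \sup_{a^p+b^p\le 1}\bigl(a\,\|f\|_{X_1^*} + b\,\|g\|_{X_2^*}\bigr).$$
Hölder's inequality in $\mathbb{R}^2$ then yields $a\,\|f\|_{X_1^*} + b\,\|g\|_{X_2^*}\le (a^p+b^p)^{1/p}\bigl(\|f\|_{X_1^*}^q+\|g\|_{X_2^*}^q\bigr)^{1/q}$, with equality attained for a suitable choice of $a,b$ proportional to $\|f\|_{X_1^*}^{q/p}$ and $\|g\|_{X_2^*}^{q/p}$. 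Hence $\|\phi\|=\bigl(\|f\|_{X_1^*}^q+\|g\|_{X_2^*}^q\bigr)^{1/q}=\|(f,g)\|_q$, which exhibits the isometric isomorphism $\left(X_1\oplus_p X_2\right)^*\cong \left(X_1^*\oplus X_2^*,\|\cdot\|_q\right)$ and completes the identification.

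There is no serious obstacle here, since the genuinely analytic input—the existence of the Banach envelope and the equality of the two duals—is carried entirely by the preceding theorem and by Theorem~1. The only real computation is the $p$-direct sum duality, which is routine via Hölder. The one point deserving slight care is the endpoint $p=1$ (so $q=\infty$), where the same maximization argument gives $\|\phi\|=\max\{\|f\|_{X_1^*},\|g\|_{X_2^*}\}$, matching the norm $\|\cdot\|_\infty$ on $X_1^*\oplus X_2^*$ and so covering the full range $1\le p<\infty$ uniformly.
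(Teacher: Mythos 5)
Your proposal is correct and follows exactly the paper's route: the paper's proof is a one-line appeal to the preceding theorem (identifying the Banach envelope as the $p$-direct sum) together with the fact that an $F$-space and its Banach envelope have the same dual, leaving the duality $\left(X_1\oplus_p X_2\right)^*\cong\left(X_1^*\oplus_q X_2^*\right)$ implicit. You simply spell out that last classical computation via H\"older, including the $p=1$ endpoint, which the paper omits.
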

	\begin{proof}
		Follows easily from the preceding theorem and the fact that the continuous dual of an $F$-space is equal to the continuous dual of it  Banach envelope.
	\end{proof}
	\begin{remark}
		One might be tempted to ask- what about the $F$-space $X_1\oplus X_2$ equipped with $\|(x,y)\|_p=\|x\|^p+\|y\|^p$ for some $p< 1$? Well, these are non-locally convex $F$-space and they do not carry a copy of $X_1$ or $X_2$, let alone contain them as maximal subspaces. Also, its Banach envelope is always $\left(X_1\oplus X_2,\|.\|_1\right)$. Further they do not possess Hahn-Banach extension property and hence deemed as object of less interest. 
	\end{remark}
	\begin{definition}
		An $F$-space $(X,d)$ is said to have the Hahn-Banach Extension property (HBEP) if for any closed subspace $M$ of $X$ and any continuous linear functional $\varphi:M\to\mathbb C$ has a continuous extension $\varphi':X\to \mathbb C$ such that $\varphi'(x)=\varphi(x)$ for all $x\in M$.
	\end{definition}
	 It is proved in \cite[Ch. 2,3]{Kalton} that the spaces $\ell^p$ and $H^p$ does not have HBEP for $0<p<1$. Motivated by these, Duren, Romberg and Shields formulated the problem in 1969-\say{{\bf Is every $F$-space $X$ with HBEP locally convex?}}. Shapiro answered this question in affirmative if $X$ has a basis \cite{Shapiro} . The answer to this question was proved to be affirmative in general by N.J Kalton (see \cite[Theorem 4.8]{Kalton}).
	 For two Banach spaces $X_1$, $X_2$ and the Young function $\Omega$ from example \ref{example}, the interpolation $F$-space $\left(X_1\oplus_\Omega X_2, d_\Omega\right)$ turns out to be locally convex and hence a Fréchet space, since it posses HBEP, as proved in the next result.
	 \begin{theorem}
	 	Let $X_1$, $X_2$ be two Banach spaces and $\Phi$ be a Young function with Mulholland condition. Then, the $F$-space $\left(X_1\oplus X_2,d_\Phi\right)$ has the HBEP.
	 \end{theorem}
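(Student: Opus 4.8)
The plan is to prove HBEP by showing that, although the unit ball of $d_\Phi$ is not convex, the \emph{topology} generated by $d_\Phi$ on $X_1\oplus X_2$ is nevertheless an ordinary Banach-space topology; once this is established, HBEP is immediate from the classical Hahn--Banach theorem. Concretely, I would compare $d_\Phi$ with the genuine product norm $N(x_1,x_2)=\|x_1\|_1+\|x_2\|_2$, which makes $X_1\oplus X_2$ a Banach space $Y$, and show that the identity map $\iota:(X_1\oplus X_2,d_\Phi)\to (Y,N)$ is a linear homeomorphism. Since HBEP is preserved under linear homeomorphisms and $Y$, being a Banach space, trivially has HBEP, the result would follow.

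The heart of the argument is therefore the topological equivalence of $d_\Phi$ and $N$, for which I would use only that (by the Mulholland hypothesis) $\Phi$ is finite, continuous and strictly increasing on $[0,\infty)$ with $\Phi(0)=0$ and $\lim_{x\to\infty}\Phi(x)=\infty$, so that $\Phi:[0,\infty)\to[0,\infty)$ is a homeomorphism with continuous inverse $\Phi^{-1}$. Writing $\|(x_1,x_2)\|_\Phi=\Phi^{-1}\big(\Phi(\|x_1\|_1)+\Phi(\|x_2\|_2)\big)$, I would establish two short estimates. First, if $N(x_1,x_2)<\delta$ then $\|x_i\|_i<\delta$ for each $i$, whence $\|(x_1,x_2)\|_\Phi\le \Phi^{-1}(2\Phi(\delta))\to 0$ as $\delta\to0$; this gives continuity of $\iota^{-1}$. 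Conversely, if $\|(x_1,x_2)\|_\Phi<\delta$ then $\Phi(\|x_1\|_1)+\Phi(\|x_2\|_2)=\Phi\big(\|(x_1,x_2)\|_\Phi\big)<\Phi(\delta)$, so by positivity and strict monotonicity $\|x_i\|_i<\delta$ for each $i$ and $N(x_1,x_2)<2\delta$; this gives continuity of $\iota$. Hence the two translation-invariant metrics define the same neighbourhoods of $0$, and therefore the same topology, so $(X_1\oplus X_2,d_\Phi)$ is linearly homeomorphic to the Banach space $Y$ and in particular locally convex (a Fréchet space).

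With this equivalence in hand the extension step is routine. Given a closed subspace $M$ and a $d_\Phi$-continuous linear functional $\varphi:M\to\mathbb C$, the functional $\varphi\circ\iota^{-1}$ is $N$-continuous on the subspace $\iota(M)$ of $Y$, hence $N$-bounded there; the classical Hahn--Banach theorem extends it to an $N$-continuous functional on all of $Y$, and composing back with $\iota$ yields a $d_\Phi$-continuous extension of $\varphi$ to $X_1\oplus X_2$. I expect the only genuinely substantive point---and the conceptual crux---to be the recognition that $d_\Phi$ induces the product topology: the Mulholland condition is used merely to guarantee, via Mulholland's inequality, that $d_\Phi$ is a metric (so that we have an $F$-space at all), while the failure of homogeneity is topologically invisible here precisely because there are only finitely many Banach summands. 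This is in sharp contrast to $\ell^p$ and $H^p$ for $0<p<1$, where non-local-convexity and the failure of HBEP arise from the accumulation of non-convexity over infinitely many essentially independent coordinates, a phenomenon absent in a finite direct sum.
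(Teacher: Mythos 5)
Your argument is correct, but it takes a genuinely different route from the paper's. The paper works functional-by-functional: given $\varphi$ on a closed subspace $M$, it forms coordinate functionals $\varphi_{\pi_1}(x)=\varphi(x,0)$ and $\varphi_{\pi_2}(y)=\varphi(0,y)$ on the projections $\pi_1(M)$ and $\pi_2(M)$, extends each by the classical Hahn--Banach theorem on the Banach spaces $X_1$ and $X_2$, and then checks that $\varphi'(x,y)=\varphi_1(x)+\varphi_2(y)$ is $d_\Phi$-continuous, using precisely the observation you isolate, namely that $d_\Phi$-convergence forces coordinate-wise norm convergence. You instead upgrade that observation to a two-sided equivalence between the $d_\Phi$-topology and the product-norm topology $N(x_1,x_2)=\|x_1\|_1+\|x_2\|_2$ (both of your estimates are sound, since the Mulholland condition as defined here makes $\Phi$ a continuous, strictly increasing bijection of $[0,\infty)$), which makes the space normable and reduces HBEP to the classical Hahn--Banach theorem in one stroke. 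Your route buys more: it establishes local convexity of $(X_1\oplus X_2,d_\Phi)$ directly, whereas the paper only infers it a posteriori from HBEP via Kalton's theorem; and it sidesteps a delicate point in the paper's construction --- for $x\in\pi_1(M)$ the point $(x,0)$ need not belong to $M$ (take $M$ to be a diagonal subspace), so the \emph{restriction} $\varphi_{\pi_1}(x)=\varphi(x,0)$ is not well defined as stated, while your global extension never requires decomposing $M$ at all. The coordinate-wise method would only have independent value in a setting where the two metrics fail to be topologically equivalent, which does not occur for a finite direct sum.
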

	 \begin{proof}
	 	Let $M$ be a closed subspace of $(X_1\oplus_\Phi X_2, d_\Phi)$ and $\varphi:M\to \mathbb C$ be a continuous linear functional. Consider the projection subspace $\pi_1(M)$ and $\pi_2(M)$ in $X_1$ and $X_2$ respectively. Let $\varphi_{\pi_1}:\pi_1(M)\to \mathbb C$ and $\varphi_{\pi_2}:\pi_2(M)\to \mathbb C$ be the restriction maps i.e $\varphi_{\pi_1}(x)=\varphi(x,0)$ and $\varphi_{\pi_2}(y)=\varphi(0,y)$. Then $\varphi_{\pi_1}$ and $\varphi_{\pi_2}$ are continuous linear maps. By Hahn-Banach theorem, there exists a continuous extensions $\varphi_1:X_1\to \mathbb C$ and $\varphi_2:X_2\to \mathbb C$ of $\varphi_{\pi_1}$ and $\varphi_{\pi_2}$ respectively. Let $\varphi':X_1\oplus X_2\to \mathbb C$ be defined as $\varphi'(x,y)=\varphi_1(x)+\varphi_2(y)$. Clearly, $\varphi'$ is an extension of $\varphi:M\to \mathbb C$. We claim that it is continous with respect to the metric $d_\Phi$. Suppose $(x_n,y_n)$ is a sequence in $X_1\oplus X_2$ converging to $(x_1,x_2)$ with respect to the metric $d_\Phi$. Then $$\lim_{n\to \infty}\Phi^{-1}\left(\Phi(\|x_n-x\|)+\Phi(\|y_n-y\|)\right)=0.$$
	 	Since $\Phi^{-1}$ and $\Phi$ are strictly increasing continuous functions, we can deduce that $\|x_n-x\|\to 0$ and $\|y_n-y\|\to 0$.  Hence, $x_n\to x$ and $y_n\to y$. Using the continuity of $\varphi_1$ and $\varphi_2$, we know that $\varphi_1(x_n)\to\varphi_1(x)$ and $\varphi_2(y_n)\to\varphi(y)$. Hence, $\varphi'(x_n,y_n)=\varphi_1(x_n)+\varphi_2(y_n)\to\varphi_1(x)+\varphi_2(y)=\varphi'(x,y)$. Thus, $\varphi'$ is an continuous extension of $\varphi$.
	 \end{proof}
	 
	 \begin{theorem}\label{locb}
	 	For a family of locally bounded $F$-spaces $\{X_i\}_{i=1}^n$ and an Young function $\Phi$ which satisfies Mulholland condition, the $F$-space $\left(\oplus_nX_i,d_\Phi\right)$ is locally bounded.
	 \end{theorem}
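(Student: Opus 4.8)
The plan is to show that the metric $d_\Phi$ induces exactly the product topology on $\oplus_{i=1}^n X_i$, and then to exploit the standard principle that local boundedness is stable under finite products. Concretely, everything reduces to sandwiching the balls of $d_\Phi$ between products of balls of the factors, and then producing a bounded neighborhood coordinatewise.

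First I would record the shape of the $d_\Phi$-balls. Writing $B^{\mathrm{dir}}_\rho=\{(x_1,\dots,x_n):d_\Phi(0,(x_1,\dots,x_n))<\rho\}$ and $B^{(j)}_\rho=\{x\in X_j:\|x\|_j<\rho\}$, the formula $d_\Phi(0,(x_i))=\Phi^{-1}\big(\sum_i\Phi(\|x_i\|_i)\big)$ together with the strict monotonicity of $\Phi$ yields the equivalence $(x_i)\in B^{\mathrm{dir}}_\rho \iff \sum_i\Phi(\|x_i\|_i)<\Phi(\rho)$. From this I extract two inclusions. Since every summand $\Phi(\|x_i\|_i)$ is nonnegative, membership in $B^{\mathrm{dir}}_\rho$ forces $\Phi(\|x_j\|_j)<\Phi(\rho)$, hence $\|x_j\|_j<\rho$, for each $j$; thus $B^{\mathrm{dir}}_\rho\subseteq \prod_j B^{(j)}_\rho$. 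Conversely, setting $\rho'=\Phi^{-1}(\Phi(\rho)/n)$ (well defined because $\Phi$ is a continuous increasing bijection of $[0,\infty)$ onto itself), if $\|x_j\|_j<\rho'$ for every $j$ then $\sum_j\Phi(\|x_j\|_j)<n\cdot\Phi(\rho)/n=\Phi(\rho)$, so $\prod_j B^{(j)}_{\rho'}\subseteq B^{\mathrm{dir}}_\rho$. These two inclusions show that the $d_\Phi$-topology coincides with the product topology.

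Next I would build the bounded neighborhood. Each $X_i$ is locally bounded, hence admits a bounded open neighborhood of $0$, which in turn contains a ball $B^{(i)}_{\delta_i}$; since subsets of bounded sets are bounded, I may take $A_i=B^{(i)}_{\delta_i}$ to be a bounded open ball and set $A=\prod_i A_i$. Using the inclusion $B^{\mathrm{dir}}_\rho\subseteq\prod_j B^{(j)}_\rho$ with $\rho=\delta:=\min_i\delta_i$ gives $B^{\mathrm{dir}}_{\delta}\subseteq\prod_i B^{(i)}_{\delta}\subseteq\prod_i B^{(i)}_{\delta_i}=A$, so $A$ is an open $d_\Phi$-neighborhood of $0$. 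To see that $A$ is $d_\Phi$-bounded, let $U$ be any neighborhood of $0$; it contains some $B^{\mathrm{dir}}_\epsilon$, and the inclusion $\prod_j B^{(j)}_{\epsilon'}\subseteq B^{\mathrm{dir}}_\epsilon$ holds with $\epsilon'=\Phi^{-1}(\Phi(\epsilon)/n)$. Each $A_i$ being bounded in $X_i$ and $B^{(i)}_{\epsilon'}$ a neighborhood of $0$ there, there is $t_i$ with $A_i\subseteq sB^{(i)}_{\epsilon'}$ for all $s>t_i$; since scalar multiplication on the direct sum is coordinatewise, $s\prod_i C_i=\prod_i sC_i$, so for $t=\max_i t_i$ and every $s>t$ one gets $A=\prod_i A_i\subseteq\prod_i sB^{(i)}_{\epsilon'}=s\prod_i B^{(i)}_{\epsilon'}\subseteq sB^{\mathrm{dir}}_\epsilon\subseteq sU$. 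Thus $A$ is absorbed by every neighborhood of $0$, i.e. $A$ is a bounded neighborhood, and $\oplus_{i=1}^n X_i$ is locally bounded.

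The only delicate point is that the $F$-norms $\|\cdot\|_i$ need not be homogeneous, so one cannot argue by naively rescaling a single $d_\Phi$-ball; the non-homogeneity is exactly what the two inclusions circumvent, by transferring both the ``neighborhood'' requirement and the ``boundedness'' requirement down to the coordinate factors, where local boundedness is available by hypothesis. I would only use that $\Phi$ is a strictly increasing continuous Young function (so that $\Phi^{-1}$ is an increasing continuous bijection of $[0,\infty)$); the Mulholland condition itself enters merely through the earlier Proposition guaranteeing that $d_\Phi$ is a bona fide $F$-space metric.
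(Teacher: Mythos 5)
Your proof is correct, and its skeleton is the same as the paper's: sandwich the $d_\Phi$-balls between products of coordinate balls, then push both the ``neighborhood'' and the ``boundedness'' requirements down to the factors. The two technical steps, however, are handled differently, and in both cases your version is the more robust one. For the inclusion of a product of coordinate balls into a $d_\Phi$-ball, the paper argues $cB_r^{(1)}\times cB_r^{(2)}\subset 2cB_r^{\Phi}$ via convexity of $\Phi$, a step that silently uses an estimate of the form $\|cx\|\le c\|x\|$ for the factor norms; since the $X_i$ are only assumed to be locally bounded $F$-spaces, their $F$-norms need not satisfy this. Your inclusion $\prod_j B^{(j)}_{\rho'}\subseteq B^{\mathrm{dir}}_{\rho}$ with $\rho'=\Phi^{-1}(\Phi(\rho)/n)$ uses only the additivity of the sum inside $\Phi^{-1}$ and the strict monotonicity of $\Phi$, so it is immune to non-homogeneity and, together with the reverse inclusion, actually identifies the $d_\Phi$-topology with the product topology. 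Second, the paper takes $B_1^{\Phi}$ as its candidate bounded neighborhood and absorbs $B_1^{(i)}$ into $sB_{r_o}^{(i)}$, which tacitly assumes the unit balls of the factors are bounded sets, whereas local boundedness only guarantees that \emph{some} neighborhood is bounded. You instead select genuinely bounded balls $A_i$ inside a bounded neighborhood of each $X_i$ and take their product, which is exactly what the hypothesis provides. The price is a slightly longer argument; the gain is that every step is justified for arbitrary locally bounded $F$-space factors, which is the stated generality of the theorem.
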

	 \begin{proof}
	 	It would be sufficient to prove this for the case $n=2$. Let $B_r^{(i)}$ denote the open ball of the F-space $X_i$ of radius $r$ and centered at $0$.  The open ball of $\left(X\oplus X_2,d_\Phi\right)$ centered at $(0,0)$ and of radius $r$ will be denoted by $B_r^\Phi$. Notice that for $(x,y)\in B_1$, we have 
	 	$\Phi^{-1}\left(\Phi(\|x\|)+\Phi(\|y\|)\right)< 1$. Thus $\Phi(\|x\|)<\Phi(1)$ and $\Phi(\|y\|)<1$. Hence, $\|x\|<1$ and similarly $\|y\|<1$. Thus, $(x,y)\in B_1^{(1)}\times B_1^{(2)}$ and 
	 	\begin{equation}\label{inc}
	 		B_1^\Phi\subset B_1^{(1)}\times B_1^{(2)}.
	 	\end{equation}
	 	Further, if $c,r>0$ then for any $(cx,cy)\in cB_{r}^{(1)}\times cB_{r}^{(2)}$, we have 
	 	\begin{align}
	 		\left|\left|\left(\frac{cx}{2c}.\frac{cy}{2c}\right)\right|\right|_\Phi&=\Phi^{-1}\left(\Phi(\frac{\|cx\|}{2c})+\Phi(\frac{\|cy\|}{2c})\right)\nonumber\\&\leq\Phi^{-1}\left(\frac{1}{2}\Phi(\|x\|)+\frac{1}{2}\Phi(\|y\|)\right)&&\text{due to convexity of}~\Phi\nonumber\\&\leq \Phi^{-1}\left(\frac{1}{2}\Phi(r)+\frac{1}{2}\Phi(r)\right)&&\text{due to}~\Phi~\text{being increasing}\nonumber\\&=r.
	 	\end{align}
	 	Thus, \begin{equation}\label{inc1}
cB_{r}^{(1)}\times cB_{r}^{2}\subset 2cB_r^{\Phi}~~~~~~~~~~~~~~~~~~~~~~~~~~~~~~~~~~~~~\text{for each}~c,r>0
	 	\end{equation}
	 	Now fix a $r_o<1$. Since $X_1$ and $X_2$ are locally, we can find a $t$ such that $B_1^{(1)}\subset sB_{r_o}^{(1)}$ and $B_1^{(2)}\subset sB_{r_o}^{(2)}$ for all $s>t$. Hence, 
	 	\begin{equation}
	 		B_1^{(1)}\times B_1^{(2)}\subset sB_{r_o}^{(1)}\times sB_{r_o}^{(2)}~~~~~~~~~~~~~~\forall s>t.
	 	\end{equation}
	 	
	 	Combining the above inclusion with the inclusion in equation-\ref{inc} and \ref{inc1}, we get 
	 	\begin{equation}
B_1^\Phi\subset	B_1^{(1)}\times B_1^{(2)}\subset sB_{r_o}^{(1)}\times sB_{r_o}^{(2)}\subset 2sB_{r_o}^\Phi~~\forall s>t.
	 	\end{equation}
	 	Thus, the collection of open balls of radius less than one and centered at origin forms a neighborhood base at origin. The set $B_1$ is bounded and hence $\left(X_1\oplus X_2,d_\Phi\right)$ is a locally bounded $F$-space.
	 \end{proof}
	\subsection{Coefficient of non-homogeneity of $F$-spaces}
	Since $F$-norms are non-homogeneous, the best property an $F$-space $(X,\|.\|_F)$ they could exhibit is that there exists $M>0$ such that $\|kx\|_F\leq M|k|\cdot \|X\|_F$  holds for all $x\in X$ and $k\in \mathbb C$. We define the coefficient of non homogeneity for $F$-spaces in the obvious way as follows.
	\begin{definition}
		Let $(X,\|.\|_F)$ be an $F$-space. Then the coefficient of non-homogeneity of $X$ is $$\nu_{(X,\|.\|_F)}=\inf\{M>0~:~\|kx\|\leq M|k|.\|x\|,~\forall k\in \mathbb C, x\in X\}.$$
	\end{definition}
	Obviously not all spaces have a finite coefficient of non-homogeneity. For example, if $0<p<1$, then $(\mathbb R^2,\|.\|_p)$ does not have finite coefficient of non-homogeneity because $\|(kx,ky)\|_p=k^p\|(x,y)\|_p$. We will show that $F$-norms associated to the Young functions with Mulholland condition, whose characteristic function has linear asymptote, have finite coefficient of non-homogeneity.\\
	
	\begin{lemma}\label{coeff}
		If $\Phi$ is a Young function with Mulholland condition such that its characteristic function has a line as its asymptote and $X_1$, $X_2$ are $F$-spaces with finite coefficient of non-homogeneity, then the coefficient of non-homogeneity of the $F$-space $(X_1\oplus X_2,\|.\|_\Phi)$ is $\max\{\nu_{X_1},\nu_{X_2}\}$. i.e, $\nu_{(X_1\oplus X_2,\|.\|_\Phi)}=\max\{\nu_{X_1},\nu_{X_2}\}$.
	\end{lemma}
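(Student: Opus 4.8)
The plan is to prove the two inequalities $\nu_{(X_1\oplus X_2,\|.\|_\Phi)}\ge \nu$ and $\nu_{(X_1\oplus X_2,\|.\|_\Phi)}\le \nu$ separately, where I abbreviate $\nu=\max\{\nu_{X_1},\nu_{X_2}\}$. The only structural facts I will need are that $\Phi$ is continuous and strictly increasing (so $\Phi^{-1}$ exists and is increasing), that Mulholland's condition is precisely the statement that $\rho(t):=t\Phi'(t)/\Phi(t)$ is non-decreasing (it is the derivative of $u\mapsto\log\Phi(e^u)$, whose convexity is Mulholland's condition), and that a linear asymptote of the characteristic function makes $\Phi$ comparable to the homogeneous Young function $\Phi_o$ attached to that asymptote.

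For the lower bound I would use that each coordinate axis sits isometrically inside the direct sum carrying its own $F$-norm: since $\|k(x,0)\|_\Phi=\Phi^{-1}(\Phi(\|kx\|_1))=\|kx\|_1$ and $\|(x,0)\|_\Phi=\|x\|_1$, the defining inequality $\|k\cdot\|\le M|k|\,\|\cdot\|$ restricted to $\{(x,0)\}$ is literally the defining inequality for $X_1$. Hence every $M$ admissible for $X_1\oplus X_2$ is admissible for $X_1$, so $\nu_{(X_1\oplus X_2,\|.\|_\Phi)}\ge\nu_{X_1}$; the symmetric argument on $\{(0,y)\}$ gives $\ge\nu_{X_2}$, and therefore $\ge\nu$.

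The upper bound is the substantive step. Fix $x_1,x_2$, write $a=\|x_1\|_1$, $b=\|x_2\|_2$, and use $\|kx_i\|_i\le\nu_{X_i}|k|\,\|x_i\|_i\le\nu|k|\,\|x_i\|_i$ together with monotonicity of $\Phi,\Phi^{-1}$ to obtain
\[
\|(kx_1,kx_2)\|_\Phi\le \Phi^{-1}\big(\Phi(\nu|k|a)+\Phi(\nu|k|b)\big)=:S(\nu|k|),\qquad S(\mu):=\Phi^{-1}\big(\Phi(\mu a)+\Phi(\mu b)\big).
\]
Everything now reduces to the scalar claim $S(\mu)\le \mu S(1)$, and I would in fact show the stronger statement that $\mu\mapsto S(\mu)/\mu$ is non-increasing. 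Differentiating $\Phi(S(\mu))=\Phi(\mu a)+\Phi(\mu b)$ gives $\Phi'(S)S'=a\Phi'(\mu a)+b\Phi'(\mu b)$, from which the sign of $(S/\mu)'$ equals the sign of $g(\mu a)+g(\mu b)-g(S)$ with $g(t)=t\Phi'(t)$. Writing $g=\rho\,\Phi$ and using $\Phi(S)=\Phi(\mu a)+\Phi(\mu b)$, this difference becomes
\[
\Phi(\mu a)\big(\rho(\mu a)-\rho(S)\big)+\Phi(\mu b)\big(\rho(\mu b)-\rho(S)\big).
\]
Since $\Phi(S)\ge\Phi(\mu a),\Phi(\mu b)$ forces $S\ge\mu a,\mu b$, and $\rho$ is non-decreasing by Mulholland's condition, both bracketed factors are $\le0$; hence $(S/\mu)'\le0$. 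For $\nu|k|\ge1$ this yields $S(\nu|k|)\le\nu|k|\,S(1)=\nu|k|\,\|(x_1,x_2)\|_\Phi$, which is exactly the required estimate.

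The remaining range of small scalars is where the linear-asymptote hypothesis does the work: it makes $\Phi$ comparable to the homogeneous $\Phi_o$, for which the combination $(a,b)\mapsto\Phi_o^{-1}(\Phi_o(a)+\Phi_o(b))$ is genuinely homogeneous (it is the norm of the Banach envelope computed in the previous theorem), so $S(\mu)/\mu$ stays bounded and is governed by this homogeneous model, introducing no non-homogeneity beyond that carried by the coordinates. Combined with the lower bound this gives $\nu_{(X_1\oplus X_2,\|.\|_\Phi)}=\nu$. I expect the genuine obstacle to be precisely this upper bound, namely showing that the Orlicz-type combination $\Phi^{-1}(\Phi(\cdot)+\Phi(\cdot))$ contributes nothing of its own: the monotonicity of $\rho$ settles the scaling-up regime cleanly, whereas the scaling-down regime must be anchored to the asymptotic line so that, at the relevant scales, the $F$-norm is indistinguishable from the homogeneous norm of $\Phi_o$.
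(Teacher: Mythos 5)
Your lower bound (restriction to the coordinate axes, where $\|k(x,0)\|_\Phi=\|kx\|_1$ because $\Phi(0)=0$) is correct and is something the paper simply omits. Your treatment of the scaling\emph{-up} regime is also correct and takes a genuinely different route from the paper: the paper's proof works by sandwiching $\Phi$ between the two power functions attached to the asymptote and pushing everything through the homogeneous model, whereas you extract from Mulholland's condition the monotonicity of $\rho(t)=t\Phi'(t)/\Phi(t)$ and deduce directly that $S(\mu)/\mu$ is non-increasing; that computation is right (up to the usual a.e.-differentiability caveat for convex functions) and cleanly gives $\|(kx_1,kx_2)\|_\Phi\le\nu|k|\,\|(x_1,x_2)\|_\Phi$ whenever $\nu|k|\ge1$.

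The gap is the small-scalar regime, and it is not repairable: your own monotonicity result closes that door. Since $S(\mu)/\mu$ is non-increasing, for $\mu<1$ you get $S(\mu)\ge\mu S(1)$, with \emph{strict} inequality unless $(a,b)\mapsto\Phi^{-1}(\Phi(a)+\Phi(b))$ is genuinely homogeneous on the scales involved. Concretely, take $X_1=X_2=\mathbb R$ and the function $\Omega$ of Example~\ref{example}, and set $h(a)=\Omega^{-1}(2\Omega(a))/a$. On the range where $\Omega(t)=t^2$ one has $h(a)=\sqrt2$, while $h(a)\to 2^{1/3}$ as $a\to\infty$ because $\Omega(t)\sim\tfrac14 t^3$; admissibility of $M$ forces $h(|k|a)\le M\,h(a)$ for all $k,a$, hence $M\ge \sqrt2/2^{1/3}=2^{1/6}>1=\max\{\nu_{X_1},\nu_{X_2}\}$. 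So the asserted equality fails, and the appeal to the linear asymptote cannot rescue it: the asymptote controls $\Phi$ only at infinity, while the ratio $h(a)$ varies across scales. What your argument actually yields --- and what the paper's argument yields once its reversed inequalities are corrected (the sandwich should read $e^{-c}x^{m+1}\le\Phi(x)\le x^{m+1}$, after which the factors of $e^{c/(m+1)}$ do \emph{not} cancel as claimed) --- is only an estimate of the form $\nu_{(X_1\oplus X_2,\|.\|_\Phi)}\le C\max\{\nu_{X_1},\nu_{X_2}\}$ with a constant $C>1$ depending on $\Phi$ through $c$ and $m$, i.e.\ finiteness of the coefficient of non-homogeneity rather than its exact value.
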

	\begin{proof}
		Let $\chi(x)$ be the characteristic function of $\Phi$ and $\chi'(x)=mx-c$ be the asymptote to $\chi$. Further, let $\Phi'$ be the Young function whose characteristic function is $\chi'$. Then, $mx\leq \chi(x)\leq \chi'(x)$. And hence,
		$$\Phi''(x)=x^{m+1}\leq \Phi(x)\leq \frac{1}{e^c}x^{m+1}=\Phi'(x).$$
		Let $m_1=\nu_{X_1}$ and $m_2=\nu_{X_2}$ and $M=\max\{m_1,m_2\}$. Now notice that
	\begin{align}
			||k(x,y)||_\Phi&=\Phi^{-1}\left(\Phi(||kx||)+\Phi(||ky||)\right)\nonumber\\&\leq {\Phi''}^{-1}\left(\Phi'(m_1|k|||x||)+\Phi'(m_2|k|||y||)\right)\nonumber\\
			&\leq  \Phi''^{-1}\left(\frac{{M|k|}^{m+1}}{e^c}(\|x\|^{m+1}+\|y\|^{m+1})\right)\nonumber\\&=\frac{M|k|}{e^{\frac{c}{m+1}}}\left(\|x\|^{m+1}+\|y\|^{m+1}\right)^{\frac{1}{m+1}}\nonumber\\&=\frac{M|k|}{e^{\frac{c}{m+1}}}\Phi'^{-1}\left(\Phi'(\|x\|)+\Phi'(\|y\|)\right)\nonumber\\&\leq \frac{M|k|}{e^{\frac{c}{m+1}}}\Phi'^{-1}\left(e^c(\Phi(\|x\|)+\Phi(\|y\|))\right)\nonumber\\&=\frac{M|k|}{e^{\frac{c}{m+1}}}\cdot e^{\frac{c}{m+1}}\Phi'^{-1}\left(\Phi(\|x\|)+\Phi(\|y\|)\right)\nonumber\\&\leq M|k|\cdot\Phi^{-1}\left(\Phi(\|x\|)+\Phi(\|y\|)\right)
	\end{align}
	Thus, $\|(kx,ky)\|_\Phi\leq M|k|\cdot\|(x,y)\|_\Phi$ holds for all $k\in \mathbb C$ and $x\in X_1$, $y\in Y_1$. Hence, $\nu_{(X_1\oplus X_2,\|.\|_\Phi)}=\max\{\nu_{X_1},\nu_{X_2}\}$.
		\end{proof}
	
\section{Direct Sums of Orlicz spaces}
This section is dedicated to establishing a proper notion of direct sums of Orlicz spaces. Recall that if $(S,\mathcal A, \mu)$ is a sigma-finite measure space, then we can define the $p$-direct sum $L^p(S)\oplus_p L^p(S)$ equipped with the $p$-norm $||(f,g)||_p=\left(||f||^p+||g||^p\right)^{1/p}.$ Further the $L^p(S)\oplus_p L^p(S)$ is isometrically isomorphic to $L^p\left(\{1,2\}, L^p(S)\right)$ (Bochner space, see \cite[1.2b]{Hytonen}) through the identification
$\theta(f,g)(1)=f,~\theta(f,g)(2)=g$. Further $L^p\left(\{1,2\}, L^p(S)\right)$ is isometrically isomorphic to $L^p(\{1,2\}\times S)$ (see \cite[Prop. 1.2.24]{Hytonen}). Hence, the $p$-direct sum of $L^p(S)$ is again an $L^p$ space, albeit over a different measure space $\{1,2\}\times S$ (product measure of counting measure space $\{1,2\}$ and measure space $S$.)\\
Now suppose $\Phi$ is any Young function and $(S,\mathcal A,\mu)$ be a measure space. Consider the Orlicz space $L^\Phi(S)$ as defined in section \ref{Orlicz}. We aim to define an appropriate norm $N_\Gamma$ on the vector space $L^\Phi(S)\oplus L^\Phi(S)$ such that it becomes a $L^\Phi$ space on some measure space.\\

Let $\Gamma:\mathbb R^2\to [0,\infty)$ be any convex continuous function which is radially increasing (i.e, $\Gamma(rx,ry)$ is an increasing function of $r$ for a fixed $(x,y)$) and the contours $U_c=\{(x,y)~:~\Gamma(x,y)=c\}$ for any $c>0$ are all convex polygons with fixed number of sides and centered at origin. Further the extreme points of the polygon $U_1$ are no farther than one unit from origin and $\Gamma(0,0)=0$. Define $N_{\Gamma}$ on $L^\Phi(S)\oplus L^\Phi(S)$ as 
$$N_\Gamma(f,g)=\inf\left\{\lambda>0~:~\Gamma\left(\int_S\Phi\left(\frac{|f(t)|}{\lambda}\right)dt, \int_S\Phi\left(\frac{|g(t)|}{\lambda}\right)dt\right)\leq 1\right\}$$
We need to verify that this is a well defined norm. Clearly $N_\Gamma(f,g)\geq 0 $. If $f=h=0$ then $N_\Gamma(f,g)=0$ follows easily. Further if $N_\Phi(f,g)=0$, then there exists a sequence $\{\lambda_n\}_{n=1}^\infty \to 0$ such that $\Gamma\left(\int_S\Phi\left(\frac{|f(t)|}{\lambda_n}\right)dt, \int_S\Phi\left(\frac{|g(t)|}{\lambda_n}\right)dt\right)\leq 1$ for each $n$. Hence, $$\int_S\Phi\left(\frac{|f(t)|}{\lambda_n}\right)dt,\int_S\Phi\left(\frac{|g(t)|}{\lambda_n}\right)dt\in Int(U_1)$$
In particular $\int_S\Phi\left(\frac{|f(t)|}{\lambda_n}\right)dt\leq 1$ for each $n$. Now suppose that $f$ is a non-zero function, then there exists a set $A$ of positive measure and an $\epsilon>0$ such that $|f(t)|\geq \epsilon$ for all $t\in A$. Thus,
\begin{align*}
	\mu(A)\Phi\left(\frac{\epsilon}{\lambda_n}\right)&\leq \int_X\Phi\left(\frac{|f(t)|}{\lambda_n}\right)dt\\&\leq 1&&\text{for each}~n
	\end{align*}
Which means $\Phi(\frac{\epsilon}{\lambda_n})\leq \frac{1}{\mu(A)}$ for each $n$. But this is absurd because $\frac{\epsilon}{\lambda_n}$ increases indefinitely and $\Phi$ is an increasing function. Hence, contrary to our assumption, $f$ must be a zero almost everywhere function. Similarly, $g$ also vanishes almost everywhere. Now we verify the triangle inequality. Suppose $(f_1,g_1), (f_2,g_2)\in L^\Phi(S)\oplus L^\Phi(S)$ and $N_\Gamma(f_1,g_1)=k_1$ and $N_\Gamma(f_2,g_2)=k_2$. Now notice that 
\begin{align*}
&\Gamma\left(\int_S\Phi\left(\frac{|f_1(t)+f_2(t)|}{k_1+k_2}\right)dt,\int_S\Phi\left(\frac{|g_1(t)+g_2(t)|}{k_1+k_2}\right)dt\right)\\&\leq \Gamma\left(\frac{k_1}{k_1+k_2}\left(\int_S\Phi\left(\frac{|f_1(t)|}{k_1}\right)dt,\int_S\Phi\left(\frac{|g_1(t)|}{k_1}\right)dt\right)+\frac{k_2}{k_1+k_2}\left(\int_S\Phi\left(\frac{|f_1(t)|}{k_1}\right)dt,\int_S\Phi\left(\frac{|g_1(t)|}{k_1}\right)dt\right)\right)\\&\leq \frac{k_1}{k_1+k_2}\Gamma\left(\int_S\Phi\left(\frac{|f_1(t)|}{k_1}\right)dt,\int_S\Phi\left(\frac{|g_1(t)|}{k_1}\right)dt\right)+\leq \frac{k_2}{k_1+k_2}\Gamma\left(\int_S\Phi\left(\frac{|f_2(t)|}{k_2}\right)dt,\int_S\Phi\left(\frac{|g_2(t)|}{k_2}\right)dt\right)\\&\leq \frac{k_1}{k_1+k_2}+\frac{k_1}{k_1+k_2}\\&=1
\end{align*}
Hence, $N_\Gamma((f_1,g_1)+(f_2,g_2))\leq N_\Gamma(f_1,g_1)+N_\Gamma(f_2,g_2)$.

Finally, we verify that $L^\Phi(S)\oplus L^\Phi(S)$ is complete with respect to the norm $N_\Gamma$. Let $\{(f_i,g_i)\}_{i=1}^\infty$ be a Cauchy sequence in $L^\Phi(S)\oplus L^\Phi(S)$ with respect to  norm $N_\Gamma$. Then for each $\epsilon>0$, there exists $N_\epsilon\in \mathbb N$ such that $N_\Gamma((f_n,g_n)-(f_m,g_m))<\epsilon$ for all $n,m\geq N_\epsilon$. Hence for each pair of positive integers $(n,m)$ such that $n,m\geq N_\epsilon$, we can choose a $0<\lambda_{n,m}<\epsilon$ such that $$\Gamma\left(\int_X\Phi\left(\frac{|f_n(t)-f_m(t)|}{\lambda_{n,m}}\right)dt, \int_X\Phi\left(\frac{|g_n(t)-g_m(t)|}{\lambda_{n,m}}\right)\right)\leq 1~~~~\forall n,m\geq N_\epsilon.$$

Hence, $\int_X\Phi\left(\frac{|f_n(t)-f_m(t)|}{\lambda_{n,m}}\right)dt\leq 1$ for all $n,m\geq N_\epsilon$, which in turn means $N_\Phi(f_n-f_m)\leq\lambda_{n,m}\leq \epsilon$ for each $n,m\geq N_\epsilon$ Hence $\{f_n\}_{n=1}^\infty$ and similarly $\{g_n\}_{n=1}^\infty$ are Cauchy sequences in $L^\Phi(S)$. Suppose $f_n\to f\in L^\Phi(S)$ and $g_n\to g\in L^\Phi(S)$. Then with easy computations similar to above, one can conclude that $(f_n,g_n)$ converges to $(f,g)$ with respect to $N_\Gamma$ norm. Hence $(L^\Phi(S))\oplus L^\Phi(S)$ is complete with respect to $N_\Gamma$.\\

We now establish that the appropriate norm for the direct sum $L^\Phi(S)\oplus L^\Phi(S)$ corresponds to the function $\Gamma(x,y)=|x|+|y|$.

\begin{theorem}
	Let $\Gamma:\mathbb R^2\to [0,\infty)$ be the function $\Gamma(x,y)=|x|+|y|$ and $(S,\mathcal A,\mu)$ be a measure space. Then the map $\eta:(L^\Phi(S)\oplus L^\Phi(S),N_\Gamma)\to L^\Phi(\{1,2\}\times S)$ defined as $$\eta(f,g)(1,x)=f(x),~\eta(f,g)(2,x)=g(x)$$ is an isometric isomorphism of Banach spaces.
\end{theorem}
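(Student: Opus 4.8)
The plan is to verify directly that $\eta$ is a linear bijection which carries $N_\Gamma$ to the gauge (Luxemburg) norm $N_\Phi$ on $L^\Phi(\{1,2\}\times S)$; the whole point is that integration against the product $\nu$ of the counting measure on $\{1,2\}$ and $\mu$ on $S$ splits a single integral over $\{1,2\}\times S$ into the sum of two integrals over $S$, which is precisely the shape of $\Gamma(x,y)=|x|+|y|$.

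First I would write both norms out explicitly. For $\Gamma(x,y)=|x|+|y|$ the condition defining $N_\Gamma(f,g)$ is that $\lambda>0$ satisfy
\[
\int_S\Phi\!\left(\frac{|f|}{\lambda}\right)d\mu+\int_S\Phi\!\left(\frac{|g|}{\lambda}\right)d\mu\le 1,
\]
whereas $N_\Phi(h)$ for $h\in L^\Phi(\{1,2\}\times S)$ is the infimum of those $k>0$ with $\int_{\{1,2\}\times S}\Phi(|h|/k)\,d\nu\le 1$. The key computation is then immediate from Tonelli's theorem: since the measure on $\{1,2\}$ is counting measure, for every $k>0$
\[
\int_{\{1,2\}\times S}\Phi\!\left(\frac{|\eta(f,g)|}{k}\right)d\nu=\int_S\Phi\!\left(\frac{|f|}{k}\right)d\mu+\int_S\Phi\!\left(\frac{|g|}{k}\right)d\mu.
\]
The right-hand side is exactly the quantity tested in the definition of $N_\Gamma(f,g)$, so the set of admissible $\lambda$ for $N_\Gamma(f,g)$ and the set of admissible $k$ for $N_\Phi(\eta(f,g))$ coincide, forcing $N_\Phi(\eta(f,g))=N_\Gamma(f,g)$. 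This single identity is the entire content of the isometry claim.

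It then remains to dispatch the algebraic assertions, all routine. Linearity of $\eta$ is immediate from its pointwise definition. For well-definedness I would apply the displayed identity with $k=1/\beta$: if $f,g\in L^\Phi(S)$ there is a common $\beta>0$ making both $\int_S\Phi(\beta|f|)\,d\mu$ and $\int_S\Phi(\beta|g|)\,d\mu$ finite, hence their sum $\int_{\{1,2\}\times S}\Phi(\beta|\eta(f,g)|)\,d\nu$ is finite and $\eta(f,g)\in L^\Phi(\{1,2\}\times S)$. Conversely every $h\in L^\Phi(\{1,2\}\times S)$ equals $\eta$ of the pair $\big(x\mapsto h(1,x),\,x\mapsto h(2,x)\big)$, whose components lie in $L^\Phi(S)$ by the same splitting, giving surjectivity; injectivity is clear since $\eta(f,g)=0$ forces $f=g=0$ almost everywhere. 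Being a norm-preserving linear bijection, $\eta$ is the asserted isometric isomorphism. There is no genuine obstacle here — the only point requiring care is the appeal to Tonelli to justify the splitting of the integral, which is legitimate because the integrand is nonnegative and measurable on the product space.
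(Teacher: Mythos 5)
Your proposal is correct and follows essentially the same route as the paper: both reduce the isometry to the observation that integration over $\{1,2\}\times S$ with the product of counting measure and $\mu$ splits into the sum of the two integrals over $S$, which is exactly $\Gamma(x,y)=|x|+|y|$, so the admissible sets of $\lambda$ in the two gauge norms coincide. You supply slightly more detail on well-definedness and bijectivity than the paper does, but the core argument is identical.
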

\begin{proof}
	Clearly $\eta$ is a linear bijection, as can be verified easily. Further for any $(f,g)\in L^\Phi(S)\oplus L^\Phi(S)$, we have
	\begin{align}\label{norm}
		N_\Phi(\eta(f,g))&=\inf\left\{\lambda>0~:~ \int_{\{1,2\}\times S}\Phi\left(\frac{|\eta(f,g)(t,s)|}{\lambda}\right)d\mu(s,t)\leq 1\right\}.
	\end{align}
	But \begin{align*}
\int_{\{1,2\}\times S}\Phi\left(\frac{|\eta(f,g)(t,s)|}{\lambda}\right)d\mu(s,t)&=\int_{ S}\Phi\left(\frac{|f(x)|}{\lambda}\right)dx+\int_{ S}\Phi\left(\frac{|g(x)|}{\lambda}\right)dx\\&=\Gamma\left(\int_{ S}\Phi\left(\frac{|f(x)|}{\lambda}\right)dx, \int_{ S}\Phi\left(\frac{|g(x)|}{\lambda}\right)dx\right)
	\end{align*}
	Hence, by equation \ref{norm}, we have  
	\begin{align*}
			N_\Phi(\eta(f,g))&=\inf\left\{\lambda>0~:~ \Gamma\left(\int_{ S}\Phi\left(\frac{|f(x)|}{\lambda}\right)dx, \int_{ S}\Phi\left(\frac{|g(x)|}{\lambda}\right)dx\right)\leq 1\right\}\\&=N_\Gamma(f,g)
	\end{align*}
	Thus, the isometric isomorphism is established.
\end{proof}
~\\
{\bf Data availability:} There is no data associated to this article.\\
~\\{\bf Statements and Declarations:}
Both the authors have equal contributions in this article. This research was supported by the Institute of Mathematics, Physics and Mechanics, Ljubljana (Slovenia), within the research program \say{Algebra,  Operator theory and Financial mathematics (code: P1-0222)}. Authors declare that there are no other competing interests that could have influenced the work reported in this article.\\

\end{document}